\numberwithin{equation}{section}
\newtheorem{theorem}{Theorem}[section]
\newtheorem{lemma}[theorem]{Lemma}
\newtheorem{corollary}[theorem]{Corollary}
\newtheorem{proposition}[theorem]{Proposition}
\newtheorem{Lemma A.1}{Lemma A.1}
\theoremstyle{definition}
\newtheorem{definition}[theorem]{Definition}
\theoremstyle{remark}
\newtheorem{remark}[theorem]{Remark}
\begin{document}

\title{A Geometric Characterization of Potential Navier-Stokes Singularities}

%\title{Vorticity flux and geometric regularity criteria for the Navier-Stokes
%equations}

	\author{Zhen Lei\footnotemark[1] \and Xiao Ren\footnotemark[2] \and Gang Tian\footnotemark[3]}

\renewcommand{\thefootnote}{\fnsymbol{footnote}}
\footnotetext[1]{School of Mathematical Sciences, LMNS and Shanghai Key Laboratory for Contemporary Applied Mathematics,  Fudan University, China. Email: zlei@fudan.edu.cn}

\footnotetext[2]{Beijing International Center for Mathematical Research, Peking University, China. Email: renx@pku.edu.cn}
\footnotetext[3]{Beijing International Center for Mathematical Research, Peking University, China. Email: gtian@math.pku.edu.cn}

\maketitle

\begin{abstract}
  For a local suitable weak solution to the Navier-Stokes equations, we prove
  that if the vorticity vectors belong to a double cone in regions of high
  vorticity magnitude, then the solution is regular. Roughly speaking this implies that, near a potential singularity, the directions of vorticity cannot avoid any great circle on the unit sphere. Our method, based on
  the control of local vorticity fluxes, is inspired by the classical
  Kelvin-Helmholtz law for ideal fluids and the Type I regularity theory
  for axisymmetric Navier-Stokes solutions. 
\end{abstract}

\section{Introduction}\label{sec:1}

In this paper, we study the global regularity problem of the incompressible
Navier-Stokes equations
\begin{equation}
  \left\{ \begin{array}{l}
    \partial_t v - \Delta v + v \cdot \nabla v + \nabla p = 0,\\
    \nabla \cdot v = 0,
  \end{array} \right. \label{eq:NS}
\end{equation}
in three space dimensions, see {\cite{F}} for the precise statement. From
\eqref{eq:NS} one can derive that the vorticity $\omega = \nabla \times v$
solves
\begin{equation}
  \partial_t \omega - \Delta \omega + v \cdot \nabla \omega = \omega \cdot
  \nabla v. \label{eq:vorticity}
\end{equation}
The outstanding challenge of the global regularity problem of \eqref{eq:NS} is
usually attributed to the following two aspects. Firstly, the natural energy
estimates of \eqref{eq:NS} are supercritical with respect to the natural
scaling of \eqref{eq:NS}, making it hard to effectively control the
solution on fine scales. Secondly, there is a stretching mechanism for
$\omega$ due to the nonlinear term $\omega \cdot \nabla v$ in
\eqref{eq:vorticity}, which could potentially create large vorticity
magnitudes and lead to a finite-time blow-up. Note that both difficulties are
absent in two space dimensions.

Due to the supercritical barrier, most of the regularity results for
\eqref{eq:NS} proved so far involve some kind of \textit{critical/Type I}
assumption, that is, pointwise or integral estimates that are invariant
under the Navier-Stokes scaling
\[ v^{(\lambda)} = \lambda v (\lambda x, \lambda^2 t), \quad p^{(\lambda)} =
   \lambda^2 p (\lambda x, \lambda^2 t) . \]
A commonly used definition of Type I blow-ups can be found in Definition
\ref{def:Type-I}. It is still open whether such blow-ups could exist at all.
Usually, in addition to the Type I assumption, some extra smallness must be assumed or exploited to prove
regularity. For example, it is known that if certain scale-invariant
quantities are uniformly small, then the solution is regular, see
{\cite{CKN, TX, SS}}. In {\cite{ESS}}, Escauriaza,
Seregin and Sverak proved the $L^{\infty}_t L^3_x$ regularity criteria using
backward uniqueness theory for the heat operator, and the inherent local
smallness of the $L^3$ norm plays a crucial role. In {\cite{KNSS}}, Koch,
Nadirashvili, Seregin and Sverak developed the blow-up method for
\eqref{eq:NS} and tied the Type I regularity problem of \eqref{eq:NS} to
Liouville type conjectures for bounded mild ancient solutions. In the axially
symmetric case, Type I blow-ups have been successfully excluded, see
the works of Chen, Strain, Tsai and Yau \cite{CSYT1, CSYT2} as well as \cite{KNSS, S20}. Here, the
crucial extra smallness is gained by applying the De Giorgi-Nash-Moser theory to the equation
%to a dirft-diffusion equation
%\[ \partial_t \Gamma - \Delta \Gamma + \frac{2}{r} \partial_r \Gamma + v \cdot
%   \nabla \Gamma = 0, \]
%which is 
satisfied by the scalar function $\Gamma = r v_{\theta}$. Here, $r$
is the distance to the symmetry axis and $v_\theta$ is the angular component of velocity. We refer to {\cite{SS, LR}} and references
therein for more details on related
methods. From another viewpoint, these results provide characterizations of potential Navier-Stokes singularities  in terms of their blow-up rates.

Another approach in the regularity theory of Navier-Stokes equations relies on the study of the vortex stretching mechanism. For ideal fluids, the viscosity terms in \eqref{eq:NS} and
\eqref{eq:vorticity} are absent, and due to the Lie bracket structure of $v
\cdot \nabla \omega - \omega \cdot \nabla v$, vortex lines (integral curves of vorticity) move with the fluid. Moreover, the classical Kelvin-Helmholtz law states that the flux of vorticity through an evolving vortex tube
stays constant. In the presense of viscosity, the frozen-in property of
vorticity breaks down and the topology of the vortex lines could change \cite{ELP}.
However, certain cancellations between the advection term $v \cdot \nabla
\omega$ and the stretching term $\omega \cdot \nabla v$ may still be effective
to help prevent the formation of singularities. Indeed, the stabilizing effect
of the advection term $v \cdot \nabla \omega$ is known for equations with or without viscosity, see, \textit{e.g.}, {\cite{HL, HLi, Cjj}}.

 In {\cite{CF}}, Constantin and Fefferman discovered an interesting connection between the local alignment of vorticity
directions and the regularity of Navier-Stokes solutions. Denote the vorticity
direction vector by
\[ \xi := \frac{\omega}{| \omega |}, \]
which is well-defined if $| \omega |  \neq 0 $. Using an $L^2$
estimate for the vorticity equation \eqref{eq:vorticity}, \cite{CF} proved
the regularity of solutions assuming the spatial Lipshitz continuity of $\xi$ in regions
of high vorticity magnitude. To be precise, the condition reads that for any
pair of points $(x, t), (y, t)$, there holds either $\min (| \omega (x, t) |, | \omega (y, t) |) < M$ or
\begin{equation} 
  | \xi (x, t) \times \xi (y, t) | < C | x - y |. \label{eq:Lip}
\end{equation}
Here, $C$ and $M$ are arbitrary positive constants. The key tool in
{\cite{CF}} is the following singular integral representation for the
stretching factor:
\[ \xi \cdot \nabla v \cdot \xi = \frac{3}{4 \pi} P.V. \int D (\hat{y}, \xi (x
   + y), \xi (x)) | \omega (x + y) | \frac{d y}{| y |^3}, \]
   where
\[ \hat{y} := \frac{y}{| y |}, \quad D (a, b, c) := (a \cdot c) \, \mbox{Det} (a,
   b, c) . \]
Later, based on similar ideas, Beirao da Veiga and Berselli {\cite{BB}}
weakened the condition \eqref{eq:Lip} to $\frac12$-H\"older continuity, that is,
\begin{equation}
  | \xi (x, t) \times \xi (y, t) | < C | x - y |^{\frac{1}{2}} .
  \label{eq:1-2-Holder}
\end{equation}
In {\cite{GM}}, Giga and Miura considered $L^{\infty}_x$ mild solutions
satisfying a pointwise Type I bound, and using a blow-up procedure they proved
that for such solutions the uniform spatial continuity of $\xi$ (again, in regions of high vorticity magnitude) would guarantee regularity. See also
{\cite{BGG, BP, Gsurvey}} and references therein for more results and discussions on geometric regularity criteria.

In this paper, we give a new geometric characterization of potential
Navier-Stokes singularities based on the vorticity directions. Instead of \eqref{eq:Lip}, \eqref{eq:1-2-Holder}, we work with scale-invariant scenarios concerning the range of $\xi$. Our method is
very different from those in {\cite{CF, BB} or \cite{GM}}, in particular, it relies heavily on the Lie bracket structure of the
nonlinear terms in \eqref{eq:vorticity}.

The first result deals with the case when $\omega$  belongs to either a
double cone or a (large) ball. It should be stressed that we do not need any smallness
assumption on the opening angle of the cone. For the sake of generality, we
formulate the regularity criterion for local suitable weak solutions, which
include the class of	 mild solutions with finite local energies. Let $Q(r) = B(r) \times (-r^2, 0)$ be the standard parabolic cylinders centered at the origin.

\begin{theorem}
  \label{thm:main}Let $v$ be a suitable weak solution to the Navier-Stokes
  equations in $Q(1)$. Suppose that there exists a unit vector $e \in \mathbb{S}^2$
  and some positive constants $\delta, M$ such that, for any regular point \footnote{We say that $(x,t)$ is a regular point of $v$ if $v$ is bounded in a parabolic cylinder centered at $(x,t)$.} $(x,t) \in Q(1)$ of $v$, there holds either $|\omega(x,t)| \le M$ or
  \begin{equation}
    | \xi (x, t) \times e | \le 1 - \delta, \label{eq:cone}
  \end{equation}
  then $v$ is regular in $\overline{Q(1/2)}$.
\end{theorem}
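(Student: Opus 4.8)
The plan is to promote the qualitative alignment hypothesis into a quantitative, scale-invariant bound on a local vorticity flux, and then to invoke an $\varepsilon$-regularity criterion. After a rotation we may assume $e=e_3$, and we write $\omega'=(\omega_1,\omega_2)$, $\omega_3=\omega\cdot e_3$. At any regular point with $|\omega|>M$, condition \eqref{eq:cone} is equivalent to $|\omega_3|\ge c_\delta|\omega|$ with $c_\delta:=\sqrt{2\delta-\delta^2}>0$, hence $|\omega|\le c_\delta^{-1}|\omega_3|$ there. Thus, modulo the harmless low-vorticity region $\{|\omega|\le M\}$, controlling the full vorticity magnitude reduces to controlling the vertical component $\omega_3$, equivalently (by Stokes' theorem) the circulation $\oint v\cdot d\ell$ around horizontal loops. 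Since the flux $\int_{D_\rho}\omega_3\,dx'$ through a horizontal disk of radius $\rho$ transforms correctly under the Navier-Stokes scaling, it is exactly the critical quantity one should hope to bound near a potential singularity.

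The engine of the argument is a viscous form of the Kelvin-Helmholtz law, which is where the Lie-bracket structure enters. Combining $v\cdot\nabla\omega-\omega\cdot\nabla v=-\nabla\times(v\times\omega)$ with $\nabla\cdot\omega=0$, one finds that for any surface $\Sigma(t)$ transported by the flow of $v$,
\[ \frac{d}{dt}\int_{\Sigma(t)}\omega\cdot dS=\int_{\Sigma(t)}\Delta\omega\cdot dS=-\oint_{\partial\Sigma(t)}(\nabla\times\omega)\cdot d\ell, \]
so the vorticity flux through a material surface is conserved up to a purely viscous boundary term; note that the pressure is automatically eliminated by this curl structure. I would apply this to a localized family of disks that are initially horizontal and then advected by the flow, testing the identity against spatial cutoffs to obtain a genuinely local flux functional whose time derivative is controlled by viscous boundary contributions and cutoff commutators alone.

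The double-cone geometry of \eqref{eq:cone}, rather than a thin cone, is what makes the scheme work without any smallness of the opening angle. Because $|\omega_3|\ge c_\delta|\omega|>0$ throughout the high-vorticity region, the sets $\{\omega_3>0\}$ and $\{\omega_3<0\}$ are separated there and no vortex line can cross from one to the other without leaving that region, since its vertical component can never vanish. Consequently the \emph{signed} flux, which is exactly what the identity above controls, agrees up to the low-vorticity error with the \emph{unsigned} flux $\int|\omega_3|$, which in turn dominates $\int|\omega|$ by the cone inequality. This is the mechanism by which merely staying off the equatorial great circle upgrades to a quantitative vorticity bound. To convert the resulting flux estimate into regularity I would feed it into a Caffarelli-Kohn-Nirenberg type $\varepsilon$-regularity criterion \cite{CKN}; since the flux yields an $L^1$-type scale-invariant bound on $\omega$ while the criterion requires a scale-invariant $L^{3/2}$- or $L^2$-type quantity, I would bridge the gap by interpolating against the local energy inequality of suitable weak solutions, obtaining smallness of an intermediate scale-invariant norm at small scales around every point of $\overline{Q(1/2)}$.

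The main obstacle is the scale-invariant control of the viscous boundary term $\oint_{\partial\Sigma}(\nabla\times\omega)\cdot d\ell$ together with the commutators generated by the cutoffs and by the drift of the transported disks: at the natural scaling this loss is borderline, so the boundary term cannot merely be estimated but must be absorbed, presumably through a bootstrap over dyadic scales coupled to the local energy bound. This is the precise analogue of the hardest step in the Type I theory for axisymmetric solutions, where the circulation $\Gamma=rv_\theta$ is tamed by De Giorgi-Nash-Moser techniques \cite{CSYT1,KNSS}. A secondary technical point is to guarantee that the advected disks remain of controlled geometry and uniformly transverse to $\omega$ over the relevant time interval; here the cone condition again supplies the transversality, so that the flux continues to measure the genuine vorticity magnitude.
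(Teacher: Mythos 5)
Your proposal reproduces the paper's high-level skeleton (reduce to $e=e_3$, control a horizontal vorticity flux as the critical quantity, invoke $\varepsilon$-regularity), but its central mechanism has a genuine gap: the claim that the \emph{signed} flux $\int_\Sigma \omega\cdot dS$, which is what your viscous Kelvin--Helmholtz identity controls, agrees with the \emph{unsigned} flux $\int |\omega_3|$ up to a low-vorticity error. This is false under the double-cone hypothesis \eqref{eq:cone}, precisely because the cone is double: the vorticity may point near $+e_3$ on some high-vorticity components and near $-e_3$ on others. Two disjoint, oppositely oriented vortex tubes piercing the same disk satisfy \eqref{eq:cone}, have arbitrarily large unsigned flux, and yet have cancelling signed flux; your vortex-line separation argument only shows that $\omega_3$ has constant sign on each \emph{connected component} of the high-vorticity set, not on the disk. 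Consequently, controlling $\int_\Sigma\omega\cdot dS$ controls nothing. The paper's key idea, which is exactly the ingredient you are missing, is to flip the vorticity: under \eqref{eq:cone-3} with $M=0$ the field $\tilde\omega=\omega\,\mbox{sgn}\,\omega_3$ is still divergence free (because $\omega$ must vanish wherever $\omega_3$ does), so the \emph{unsigned} flux itself obeys a Kelvin--Helmholtz-type law; for $M>0$ one uses the smooth surrogate $\omega\omega_3/\sqrt{\omega_3^2+1}$, multiplies the $\omega_3$-equation by $\omega_3/\sqrt{\omega_3^2+1}$, and integrates over \emph{fixed} Eulerian disks, with all boundary terms controlled on the quantitative regular shells of \cite{LR}. (Your Lagrangian advected disks are additionally problematic for suitable weak solutions, since no Lipschitz flow map is available near a potential singularity.)

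The second gap is your route to smallness. Interpolating a bounded critical $L^1$-type flux bound against the local energy inequality can only produce \emph{bounded} scale-invariant quantities, never small ones at small scales; if boundedness self-improved to smallness by interpolation, every Type I singularity would be trivially excluded. In the paper, the flux bound first yields uniform Type I estimates (via a Biot--Savart slice estimate and \cite[Theorem 6]{SS}), then a blow-up at the potential singularity reduces matters to an ancient solution satisfying the exact cone condition ($M=0$); for that solution the absolute flux $\Gamma(r,z,t)=\int_{\mathcal{D}(r,z,t)}|\omega_3|\,dx_1dx_2$ is a subsolution of a drift-diffusion inequality, and a De Giorgi--Nash--Moser Harnack lemma applied to $\sup\Gamma-\Gamma$ inside the regular shells gives a geometric oscillation decay, $\sup_{\mathcal{Q}(9r/20)}\Gamma\le(1-\sigma)\sup_{\mathcal{Q}(9r/10)}\Gamma$, which iterates dyadically to force $\Gamma\to 0$; only this decay supplies the smallness fed into \cite{CKN, Lin}. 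You invoke De Giorgi--Nash--Moser merely to absorb a viscous boundary term, which misplaces where the real difficulty lies: the boundary terms are handled by the quantitative shell estimates, while DGNM is the engine that breaks criticality.
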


\begin{remark}
By rotational invariance of the Navier-Stokes equations, we may restrict our attention to
the case $e = e_3 = (0, 0, 1)$. In this case, the assumption of Theorem \ref{thm:main} is
equivalent to
\[ \sqrt{\omega_1^2 + \omega_2^2} =: | \omega_h | \le C | \omega_3 | \]
{\large{(}}with $C = \frac{1 - \delta}{\sqrt{2\delta -
\delta^2}}${\large{)}} for any regular point $(x, t) \in Q (1)$ with $| \omega
(x, t) | > M$. Hence, to prove Theorem \ref{thm:main}, it is sufficient to
work under the condition that
\begin{equation}
  | \omega | \le C | \omega_3 | + M \label{eq:cone-3}
\end{equation}
at all regular points.
\end{remark}

\begin{remark}
	In view of Theorem \ref{thm:main}, it is natural to ask what happens if the vorticity is contained in a half space (a cone with opening angle $\pi$), that is,
	\begin{equation} \label{eq:115-6}
		\omega_3 \ge 0.
	\end{equation}
	In this case, it seems nontrivial to prove regularity even for axisymmetric solutions. Note that the classical regularity result for axisymmetric flows with no swirl \cite{UI} is a special case of  \eqref{eq:115-6}  with $\omega_3 \equiv 0$. 
\end{remark}

\begin{remark}
	Proving a similar theorem for the direction of velocity seems more challenging. We give a simple result in this direction for axisymmetric solutions in Section \ref{sec:last}.
\end{remark}

Theorem \ref{thm:main} can be viewed as a geometric characterization of
potential singularities of the Navier-Stokes equations. Let $v$ be a suitable weak Navier-Stokes solution in $Q(1)$, and $\mathcal{I}
\subset \mathbb{S}^2$ be the set defined as
\[ \mathcal{I} := \bigcap_{0<r<1} \bigcap_{M > 0} \overline{\mathcal{I}_{r, M}} \]
with
\[ \mathcal{I}_{r, M} := \bigg\{ \xi (x, t) : \, v
   \ \mbox{is regular at} \ (x, t) \in Q (r), \, | \omega (x, t) | > M
   \bigg\} . \]
Here, $\overline{\mathcal{I}_{r, M}}$ stands for the topological closure of $\mathcal{I}_{r, M}$ in
$\mathbb{S}^2$. If the origin is a regular point of $v$, then it is clear that $\mathcal{I} = \emptyset$. Intuitively, $\mathcal{I}$ should be a sufficiently ``large''
set if blow-up occurs at the origin. Indeed, Theorem \ref{thm:main} directly implies
that 

\begin{corollary} \label{cor:114}
	$\mathcal{I}$ intersects with every great circle on $\mathbb{S}^2$ 
	if and only if the origin is a singular point.
\end{corollary}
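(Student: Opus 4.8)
The plan is to prove Corollary~\ref{cor:114} by establishing the contrapositive in each direction, using Theorem~\ref{thm:main} together with the definition of $\mathcal{I}$. The key observation is that a great circle on $\mathbb{S}^2$ is exactly the intersection of $\mathbb{S}^2$ with a plane through the origin, i.e. the set $\{\eta \in \mathbb{S}^2 : \eta \cdot e = 0\}$ for some unit vector $e$. Thus saying $\mathcal{I}$ \emph{misses} a great circle means there is a unit vector $e$ and a gap: $\mathcal{I}$ avoids an open neighborhood of the equatorial circle $\{\eta \cdot e = 0\}$, which is precisely a condition of the form $|\xi \times e| \le 1 - \delta$ for the directions appearing in $\mathcal{I}$. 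The whole argument is essentially a translation between the geometric language of Theorem~\ref{thm:main} and the closure/intersection structure defining $\mathcal{I}$.

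First I would handle the easy implication. Suppose the origin is a regular point of $v$. Then, as noted in the excerpt, $\mathcal{I} = \emptyset$, because for sufficiently small $r$ the solution is bounded on $Q(r)$, so $|\omega|$ is bounded there and $\mathcal{I}_{r,M}$ is empty once $M$ exceeds that bound; intersecting over all $r$ and $M$ gives $\mathcal{I}=\emptyset$. An empty set certainly cannot intersect every great circle (it intersects none), so the origin being regular forces $\mathcal{I}$ to miss some great circle. This gives one direction of the contrapositive form: if $\mathcal{I}$ meets every great circle, the origin is singular.

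For the converse, I would argue the contrapositive: if $\mathcal{I}$ fails to intersect some great circle, then the origin is a regular point. So assume there is a unit vector $e$ with $\mathcal{I} \cap \{\eta \in \mathbb{S}^2 : \eta \cdot e = 0\} = \emptyset$. Since $\mathcal{I}$ is a closed subset of the compact space $\mathbb{S}^2$ and the great circle is compact, their disjointness yields a positive distance, hence a $\delta > 0$ with $|\eta \cdot e| \ge \delta'$, equivalently $|\eta \times e| \le 1 - \delta$, for all $\eta \in \mathcal{I}$ (after adjusting constants). The main work is to promote this separation from the limit set $\mathcal{I}$ to a uniform cone condition on the actual vorticity directions at regular points in some fixed $Q(r)$. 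This is where I expect the crux to lie: I would argue by contradiction using the nested structure $\mathcal{I} = \bigcap_{r,M} \overline{\mathcal{I}_{r,M}}$. If no fixed $r, M$ gave the cone bound $|\xi \times e| \le 1-\delta/2$ on $\{|\omega| > M\} \cap Q(r)$, one could extract a sequence of regular points with large vorticity whose directions approach the forbidden equatorial neighborhood; by compactness these directions would accumulate at a point of $\mathcal{I}$ within $1-\delta/2$ of the great circle, contradicting the separation. Once the uniform cone condition $|\xi \times e| \le 1 - \delta/2$ holds at all regular points of high vorticity in some $Q(r)$, rescaling to $Q(1)$ and applying Theorem~\ref{thm:main} gives regularity in $\overline{Q(r/2)}$, so in particular the origin is a regular point.

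The delicate point, and the one I would write out most carefully, is the extraction argument promoting the hypothesis on $\mathcal{I}$ to a hypothesis usable in Theorem~\ref{thm:main}: one must ensure that failure of the uniform cone bound at fixed scale really does produce an accumulation direction lying in $\mathcal{I}$ and violating the separation. This requires keeping track of both the spatial localization (points in $Q(r)$) and the high-vorticity threshold ($|\omega| > M$) simultaneously as $r \to 0$ and $M \to \infty$, matching exactly the double intersection defining $\mathcal{I}$. The topological closures are what make the accumulation point land in $\overline{\mathcal{I}_{r,M}}$ for every $r, M$, hence in $\mathcal{I}$. Everything else is bookkeeping and a direct appeal to Theorem~\ref{thm:main} after a harmless parabolic rescaling.
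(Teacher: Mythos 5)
Your proposal is correct and follows essentially the same route as the paper: the regular direction via $\mathcal{I}=\emptyset$, and the converse by using compactness to place some fixed $\overline{\mathcal{I}_{r,M}}$ inside a double cone and then invoking Theorem~\ref{thm:main} after rescaling. The only difference is presentational — your sequential-extraction argument makes explicit the compactness step that the paper compresses into the single assertion that ``for some $r$ and $M$, $\mathcal{I}_{r,M}$ is contained in the intersection of a double cone with $\mathbb{S}^2$.''
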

 
 This statement can be compared
with the classical value distribution theory of the Gauss map of minimal
surfaces, where much stronger results are available, see, e.g., {\cite{AFF}}.

\smallskip

The next result concerns the angle between the vorticity vectors within each
time slice. In contrast to the conditions \eqref{eq:Lip} and
\eqref{eq:1-2-Holder}, we do not require smallness assumptions on such angles to ensure regularity.
Instead, it is only assumed that such angles are uniformly separated from ${\frac{{\pi}}{2}}$.

\begin{corollary}
  \label{thm:rot}Let $v$ be a suitable weak solution to the Navier-Stokes
  equations in $Q (1)$. Suppose that for some positive constant $\delta$, we
  have
  \begin{equation}
    | \xi (x, t) \times \xi (y, t) | < 1 - \delta \label{eq:cone-t}
  \end{equation}
  for any pair of regular points $(x, t), (y, t) \in Q (1)$ with   $| \omega (x, t) | \cdot | \omega
  (y, t) | \neq 0$, then $v$ is regular at the origin.
\end{corollary}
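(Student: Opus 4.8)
The plan is to argue by contradiction and to reduce everything to Theorem \ref{thm:main} through its reformulation in Corollary \ref{cor:114}. Suppose the origin were a singular point of $v$. By Corollary \ref{cor:114} the limiting direction set $\mathcal{I}$ would then meet every great circle on $\mathbb{S}^2$; since $\mathcal{I}$ is compact, this is equivalent to saying that $\mathcal{I}$ cannot be enclosed in any double cone $\{x \in \mathbb{S}^2 : |x \times e| \le 1-\delta'\}$ with $e \in \mathbb{S}^2$ and $\delta' \in (0,1)$. My goal is therefore to produce a single unit vector $e$ and a $\delta' \in (0,1)$ with $|\eta \times e| \le 1-\delta'$ for every $\eta \in \mathcal{I}$. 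Such a bound places $\mathcal{I}$ in a double cone that avoids the great circle $\{x \in \mathbb{S}^2 : x \cdot e = 0\}$, and Corollary \ref{cor:114} then forces the origin to be regular, the desired contradiction. (Equivalently, one verifies the hypothesis \eqref{eq:cone} of Theorem \ref{thm:main} on a small parabolic cylinder centered at the singular point and applies that theorem directly.)

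The first, elementary, input is the per-slice geometry encoded in \eqref{eq:cone-t}. Fix a time $t$ and view each unit vorticity direction as an unoriented line; then \eqref{eq:cone-t} says exactly that any two such lines arising at regular high-vorticity points of the slice make an angle at most $\theta_0$, where $\sin\theta_0 = 1-\delta$ and $\theta_0 < \pi/2$. Consequently, fixing any one reference direction $e(t) = \xi(x_\ast(t),t)$ in the slice, every other direction $\xi(x,t)$ of that slice satisfies $|\xi(x,t)\times e(t)| \le 1-\delta$. Moreover, since suitable weak solutions are smooth near regular points and $\xi$ is continuous wherever $\omega \neq 0$, the reference direction $e(t)$ may be chosen to vary continuously in $t$: concretely, one follows a continuous curve $t \mapsto (x_\ast(t),t)$ of near-maximal vorticity points, which concentrate at the singular point as $t \to 0^-$.

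The main obstacle, and the real content of the proof, is to upgrade this per-slice cone, whose axis $e(t)$ a priori depends on $t$, to the single time-independent cone demanded by Theorem \ref{thm:main}. Every $\eta \in \mathcal{I}$ is a limit of directions $\xi(y_k,s_k)$ at high-vorticity regular points with $s_k \to 0^-$, and by the per-slice confinement each such direction lies within angle $\theta_0$ of $e(s_k)$; hence $\mathcal{I}$ is contained in the $\theta_0$-neighborhood of the cluster set $E := \bigcap_{\tau<0}\overline{\{e(t): \tau<t<0\}}$ of the axis curve. If $E$ has diameter less than $\pi/2-\theta_0$ as a set of lines, this neighborhood lies in a double cone of half-angle below $\pi/2$, producing the required $e$ and $\delta'$. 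The crux is thus to prevent the dominant direction $e(t)$ from winding across a great circle's worth of directions as $t \to 0^-$; soft continuity and compactness alone do not suffice, since a priori $e(t)$ could oscillate with unbounded total variation near the singular time. I expect this temporal coherence to be the hard step, to be established either through the quantitative vorticity-flux estimates underpinning Theorem \ref{thm:main} --- a viscous analogue of the Kelvin--Helmholtz coherence of the vortex-tube direction --- or by a blow-up and compactness argument: normalizing the vorticity at a sequence of near-maximal points and extracting a limit selects a single axis $e$, while the scale invariance of \eqref{eq:cone-t} propagates the cone bound to a fixed neighborhood of the singular point. With a time-independent cone in hand, Theorem \ref{thm:main}, applied after translating and rescaling to the singular point, yields regularity there and completes the contradiction.
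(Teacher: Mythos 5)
Your setup is right---per time slice, \eqref{eq:cone-t} lets you pick an axis $e(t)$ so that all directions in that slice lie in a cone about $e(t)$, and the whole difficulty is that this axis depends on $t$---but your proposal stops exactly at that difficulty. You write that the temporal coherence of $e(t)$ is ``the hard step, to be established either through the quantitative vorticity-flux estimates \dots or by a blow-up and compactness argument,'' without carrying out either. This is a genuine gap, not a technicality: soft arguments cannot rule out oscillation of $e(t)$, and your two suggested fixes do not obviously close it. The blow-up route is circular as stated, because performing the blow-up of Section \ref{sec:3} requires the uniform Type I bounds of Lemma \ref{lem:Type-I}, which you have not derived (your proposal never establishes any Type I control); and even granting a blow-up limit along a sequence of times, this does not constrain the axis at the times in between. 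Your claim that $e(t)$ can be taken continuous by following ``a continuous curve of near-maximal vorticity points'' is also unjustified---maximizers need not move continuously---though this is secondary.

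The paper's proof resolves the axis-dependence problem in a way you did not anticipate, and never controls the winding of $e(t)$ at all. First, it observes that the per-slice flux bound \eqref{eq:unif-omega-2}, obtained by running the divergence-free flipped-vorticity argument of Section \ref{sec:3} in coordinates adapted to $e(t)$ \emph{separately in each time slice}, has a right-hand side independent of $t$ and of the choice of axis; since the quantities entering Lemma \ref{lem:critical-Type-I} are rotation-invariant, this already yields the uniform Type I condition in $Q\big(\tfrac12\big)$ despite the rotating frame. Second, instead of forcing coherence of $e(t)$ as $t \to 0^-$, it anchors the axis at the final time by a dichotomy: either $\omega \equiv 0$ a.e.\ at $t=0$, in which case continuity in the regular shell of Lemma \ref{thm:qrs} makes the flux (hence the Type I quantities) small for $t$ near $0$ and $\varepsilon$-regularity applies; or there is a regular point $(x_\ast,0)$ with $\omega(x_\ast,0)\neq 0$, in which case $e_\ast = \xi(x_\ast,0)$ serves, by continuity and \eqref{eq:cone-t} applied within each slice $t \in (-h,0)$, as a single \emph{fixed} axis for which \eqref{eq:cone} holds, and Theorem \ref{thm:main} finishes. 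To repair your proposal you would need to supply both of these steps; the reduction to Corollary \ref{cor:114} alone does not do the work.
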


Corollary \ref{thm:rot} is not a full improvement over the results of
{\cite{CF}} and {\cite{BB}}, because here we have to assume \eqref{eq:cone-t}
throughout the region where $| \omega | > 0$, instead of $| \omega | > M$.  We cannot remove this seemingly technical restriction due to the lack of effective control on the geometry of regions of
small vorticity magnitude.

% However, if one considers solutions satisfying critical estimates in the sense of Definition \ref{def:Type-I}, then \eqref{eq:cone-t}
%is required only for the case $\min \{ | \omega (x, t) |, | \omega (y,
%t) | \} > M$. This is because, after applying the blow-up procedure (see the derivation
%of \eqref{eq:limit-M0-34}), \eqref{eq:cone-t} will be valid throught out the
%region where $| \omega | > 0$.

\paragraph{Main ideas of the proof.} We start with the
observation that, under the condition
\begin{equation}
  | \omega | \leqslant C | \omega_3 |, \label{eq:888}
\end{equation}
the flipped vorticity field
\begin{equation}
  \tilde{\omega} = \omega \, \mbox{sgn} \omega_3 \label{eq:mod-vorticity}
\end{equation}
is divergence free. Hence, we can uniformly control the local flux of
$\tilde{\omega}$ through horizontal planes, which is equal to the local
integral of $| \omega_3 |$ over horizontal variables. This quantity will be
referred to as the \textit{absolute vorticity flux}. Remarkably, together
with the condition \eqref{eq:888}, local uniform boundedness of this quantity
gives an $L^{\infty}_t L^{\infty}_{x_3} L^1_{x_1, x_2}$ estimate on $\omega$,
which is \textit{critical} with respect to the Navier-Stokes scaling. In
particular, it allows one to derive uniform Type I bounds in the sense of
Definition \ref{def:Type-I}.

Under the weaker condition \eqref{eq:cone-3}, the flipped vorticity
\eqref{eq:mod-vorticity} is no longer divergence free. Nonetheless, we can
still prove uniform boundedness of the absolute vorticity flux using the time
evolution of the flux of the modified vorticity
\[ {\widehat{\omega}} = \frac{\omega \omega_3}{\sqrt{\omega_3^2 + 1}},
\]
which is ``almost'' divergence free. Note that we rely on the
cancellations between the nonlinear terms in \eqref{eq:vorticity} which are
responsible for the validity of the Kelvin-Helmholtz law in the inviscid
case. To prove Theorem \ref{thm:main}, we still need to find some additional
smallness to break the critical scaling. Inspired by the axisymmetric Type I
regularity theory \cite{CSYT1, CSYT2, KNSS}, we prove a local decay property of the absolute vorticity
flux with the help of De Giorgi-Nash-Moser theory. Here, the quantitative
 intervals of regularity constructed in {\cite{LR}} turn out to be very useful. Finally, we prove Corollary  \ref{cor:114} and Corollary  \ref{thm:rot} by reducing them to the case considered
in Theorem \ref{thm:main}.

Unlike 2D flows, the scenario considered by Theorem \ref{thm:main} allows the amplification of vorticity by the vortex stretching mechanism. At a very heuristic level our proof suggests that the stretching of vortex tubes alone may be insufficient to generate a singularity; the twisting and folding of vortex tubes, similar to those considered in dynamo theory \cite[Section V.4]{AK}, could play an important role as it enhances the local vorticity fluxes. It is interesting to ask whether similar results hold for the Euler equations as well. We mention that for the 1D De Gregorio model, the sign of ``vorticity" indeed makes a difference in global regularity versus singularity formation \cite{HTW, LLR}. 
%, with the help of backward uniqueness methods
%developed in {\cite{ESS}}. Here, the key step in the proof is to show the
%regularity of the limiting ancient solution (obtained from blow-up procedure)
%for sufficiently small negative times and outside a sufficiently large ball.

\paragraph{Organization of the paper.} In Section \ref{sec:2}, we
collect some useful notations, definitions and known results, which will be useful in the proof. In Section
\ref{sec:3}, we prove the local uniform boundedness of the absolute vorticity
flux under the conditions \eqref{eq:888} or \eqref{eq:cone-3}. As a consequence, we derive the Type I
bounds and then carry out the standard blow-up procedure at the potential singularity. In Section
\ref{sec:4}, we prove a local decay property of the absolute vorticity flux for
the limiting ancient solution, which finishes the proof of Theorem
\ref{thm:main}. In Section \ref{sec:5}, we give the proofs of  Corollary  \ref{cor:114} and Corollary \ref{thm:rot}. Finally, in Section \ref{sec:last} we give a simple result concerning the direction of \emph{velocity} in the axisymmetric case. 

\section{Preliminaries}\label{sec:2}

Given $x = (x_1, x_2, x_3) \in \mathbb{R}^3$, we often write $x_h = (x_1,
x_2)$. Due to the special role played by the $x_3$ direction in
condition \eqref{eq:cone-3}, sometimes it is convenient to use cylinders
instead of balls. Hence, we introduce the notations
\[ B (r) = \{ x \in \mathbb{R}^3 : | x | < r \}, \quad Q (r) = B (r) \times (-
   r^2, 0), \]
\[ \mathcal{B} (r) = \{ x \in \mathbb{R}^3 : | x_h | < r, | x_3 | < r \}, \quad
   \mathcal{Q} (r) = \mathcal{B} (r) \times (- r^2, 0), \]
and write their translated versions as
$$B(r;x_0, t_0) = \{(x+x_0, t_0): x \in B(r)\}, \ Q(r;x_0, t_0) = \{(x+x_0, t+t_0): (x,t) \in Q(r)\}, \ \emph{etc}.$$
Moreover, we will make frequent use of the following discs and circles:
\[ \mathcal{D} (r, z, t) = \{ (x_h, x_3, t) : | x_h | < r, x_3 = z \}, \quad \mathcal{S}
   (r, z, t) = \{ (x_h, x_3, t) : | x_h | = r, x_3 = z \}. \]
   In the proof, we write $C$ for positive constants whose values are not necessarily fixed from line to line.

   \smallskip
\begin{remark}
  In this section, we state all the definitions and lemmas using $B (r)$ and
  $Q (r)$, but one can directly replace them with $\mathcal{B} (r)$ and
  $\mathcal{Q} (r)$ when the latter choice is more convenient. Of course, one
  can also make such replacements in Theorems \ref{thm:main} and Corollary \ref{thm:rot}
  without changing their strength at all.
\end{remark}

\subsection{Suitable weak solutions}\label{sec:21}

Throughout our proof, $(v, p)$ will be a local suitable weak solution to the
Navier-Stokes equations in $Q (1)$ or $\mathcal{Q} (1)$. By definition, a
local suitable weak solution in $Q (1)$ satisfies the following conditions.
\begin{enumerate}[(1)]
  \item Finite local energies.
  \begin{equation}
    \sup_{- 1 < t < 0} \int_{B (1)} | v |^2 d x + \int_{Q (1)} | \nabla v |^2
    d x d t + \int_{Q (1)} | p |^{\frac{3}{2}} d x d t < + \infty
    \label{eq:finite-local-energy}
  \end{equation}
  \item $(v, p)$ solves the Navier-Stokes equations in the sense of
  distributions.
  
  \item Local energy inequality. For any nonnegative smooth function $\phi$
  compactly supported in $Q (1)$ and any $- 1 < t < 0$,
  \[ \int_{B (1)} | v (x, t) |^2 \phi (x, t) d x + 2 \int_{- 1}^t \int_{B
     (1)} | \nabla v |^2 \phi d x d t \leqslant \]
  \begin{equation}
    \leqslant \int_{- 1}^t \int_{B (1)} [| v |^2 (\partial_t \phi + \Delta
    \phi) + (| v |^2 + 2 p) v \cdot \nabla \phi] d x d t
    \label{eq:local-energy-ineq}
  \end{equation}
\end{enumerate}
We shall often write
\[ G = G [v, p] := \int_{Q (1)} \left( | v |^3 + | p |^{\frac{3}{2}}
   \right) d x d t + 2, \]
which is finite due to \eqref{eq:finite-local-energy} and interpolation. The constant $2$ in the definition of $G$ is added to make the statement of Lemma \ref{thm:qrs} below more convenient. The
existence of global suitable weak solutions starting from an arbitrary $L^2$
initial data is classical (See {\cite{CKN}}). In the local regularity theory of
suitable weak solutions, a number of scale-invariant quantities often play an
important role. We denote
\begin{equation}
  F (r) = \frac{1}{r^2} \int_{Q (r)} | v |^3 d x d t, \quad E (r) =
  \frac{1}{r} \int_{Q (r)} | \nabla v |^2 d x d t \label{eq:FE}
\end{equation}
\begin{equation}
  A (r) = \frac{1}{r} \sup_{- r^2 < t < 0} \int_{B (r)} | v |^2 d x, \quad D
  (r) = \frac{1}{r^2} \int_{Q (r)} | p |^{\frac{3}{2}} d x d t \label{eq:AD}
\end{equation}
We also define these scale-invariant quantities centered at an arbitrary point
$(x_0, t_0)$, for example, let
\begin{equation} \label{eq:0115-1} F (r; x_0, t_0) = \frac{1}{r^2} \int_{Q (r)} | v (x_0 + x, t_0 + t) |^3 d x
   d t, \quad \emph{\mbox{etc}.} 
   \end{equation}
\begin{definition}
  \label{def:Type-I}We say that $(v, p)$ satisfies the Type I condition at the origin
  if the scale-invariant quantities are uniformly bounded,
  \emph{i.e.},
  \begin{equation}
    \sup_{0<r<1} \, (F + D + E + A) (r) < + \infty .
    \label{eq:Type-I-condition}
  \end{equation}
  We say that $(v, p)$ satisfies the \emph{uniform} Type I condition in $Q \bigg(
  \frac{1}{2} \bigg)$ if
  \begin{equation}
    \sup_{(x_{0,} t_0) \in Q \left( \frac{1}{2} \right)} \sup_{0 < r <
    \frac{1}{2}} (F + D + E + A) (r; x_0, t_0) < + \infty .
    \label{eq:unif-Type-I}
  \end{equation}
\end{definition}

Note that \eqref{eq:Type-I-condition} and \eqref{eq:unif-Type-I} are implied by various types of critical assumptions, see \cite{SS, SZ}. A standard blow-up procedure  can be
applied to a potential singularity satisfying the (uniform) Type I condition, see, \emph{e.g.},
{\cite[Proposition 2]{SS}}. The resulting limiting solution will be a suitable
weak ancient solution living on $\mathbb{R}^3_x \times (- \infty, 0)_t$, which
satisfies the (uniform) Type I estimates in a global sense.

\smallskip

In Section \ref{sec:3}, we will need the following lemma to derive uniform Type I estimates using the absolute vorticity flux.

\begin{lemma}
  \label{lem:critical-Type-I}Let $\frac{3}{2} < q < 2$ be fixed. Suppose that
  \[ \Lambda_q := \sup_{r > 0} \sup_{- r^2 < t < 0} r^{q - 3} \int_{B
     (r)} | v |^q d x < + \infty, \]
  then $(v, p)$ satisfies Type I condition at the origin:
  \[ \limsup_{r \rightarrow 0 +} (F + D + E + A) (r) < C (q, \Lambda_q) . \]
  Moreover, $C (q, \Lambda_q) \rightarrow 0$ if $\Lambda_q \rightarrow 0$.
\end{lemma}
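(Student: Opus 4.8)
The plan is to reduce the statement to an iteration among the standard scale-invariant quantities $F,D,E,A$, with the Morrey-type bound $\Lambda_q$ entering through a single interpolation inequality whose sublinear homogeneity is the heart of the matter. Throughout I write $\Psi(r):=A(r)+E(r)$.

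The first and most essential step is to prove the interpolation bound
\[
F(r)\;\le\; C(q)\,\Lambda_q^{3\alpha/q}\,\Psi(r)^{\gamma},\qquad \alpha=\frac{q}{6-q},\quad \gamma=\frac{3(1-\alpha)}{2}\qquad\text{(i)}.
\]
The idea is, on each ball $B(r)$ and each time slice, to interpolate $L^3_x$ between $L^q_x$ and $L^6_x$, $\|v\|_{L^3(B(r))}\le \|v\|_{L^q(B(r))}^{\alpha}\,\|v\|_{L^6(B(r))}^{1-\alpha}$ with $\alpha=q/(6-q)$, then to use $\|v\|_{L^q(B(r))}\le \Lambda_q^{1/q}r^{(3-q)/q}$ uniformly in $t$ together with the Sobolev bound $\|v\|_{L^6(B(r))}\lesssim \|\nabla v\|_{L^2(B(r))}+r^{-1}\|v\|_{L^2(B(r))}$. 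Integrating in time with H\"older and fixing the power of $r$ by scale invariance yields (i). The constraint $q\ge 3/2$ is exactly what makes the time integrability close ($3(1-\alpha)\le 2$), and $q>3/2$ makes $\gamma<1$ \emph{strict}: for $q\in(3/2,2)$ one has $\gamma\in(3/4,1)$. This sublinearity is the feature that breaks the supercritical scaling of the energy.

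Next I would invoke the standard Caffarelli--Kohn--Nirenberg-type local estimates. The pressure estimate, for $0<\rho\le r/2$,
\[
D(\rho)\;\le\; C\Big(\tfrac{\rho}{r}\Big)D(r)+C\Big(\tfrac{r}{\rho}\Big)^{2}F(r)\qquad\text{(ii)},
\]
follows from splitting $p$ into the Newtonian potential of $v\otimes v$ (controlled by Calder\'on--Zygmund in terms of $F$) and a harmonic remainder (whose $L^{3/2}$-oscillation decays); and the energy estimate from the local energy inequality \eqref{eq:local-energy-ineq},
\[
\Psi(\rho)\;\le\; C\Big(\tfrac{r}{\rho}\Big)^{2}\big[F(r)^{2/3}+F(r)+F(r)^{1/3}D(r)^{2/3}\big]\qquad\text{(iii)}.
\]
Fix $\theta\in(0,1/2]$ so that $C\theta\le 1/2$ in (ii), set $r_n=\theta^n r_0$, $X_n=\Psi(r_n)$, $Y_n=D(r_n)$, $K=C(q)\Lambda_q^{3\alpha/q}$, and substitute (i) into (ii),(iii) to obtain the closed recursion
\[
Y_{n+1}\le \tfrac12 Y_n + C_\theta K X_n^{\gamma},\qquad X_{n+1}\le C_\theta\big[K^{2/3}X_n^{2\gamma/3}+KX_n^{\gamma}+K^{1/3}X_n^{\gamma/3}Y_n^{2/3}\big].
\]
Because every exponent on the right is strictly less than $1$ (note $\tfrac{\gamma}{3}+\tfrac{2}{3}<1$), the region $\{X,Y\le R\}$ is invariant once $R=R(q,\Lambda_q,\theta)$ is large, and $X_0,Y_0$ are finite by \eqref{eq:finite-local-energy}, so the sequence stays bounded. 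Writing $\bar X=\limsup_n X_n$, $\bar Y=\limsup_n Y_n$ and passing to the $\limsup$ in the recursion gives $\bar Y\le 2C_\theta K\bar X^{\gamma}$ and then $\bar X\le C_\theta'\big(K^{2/3}\bar X^{2\gamma/3}+K\bar X^{\gamma}\big)$; the sublinear exponents force $\bar X\le \eta(K)$ with $\eta(K)\to0$ as $K\to0$, \emph{independently of the initial data}. Converting from the geometric sequence $\{r_n\}$ to all small $r$ by the elementary doubling bounds for scale-invariant quantities, and feeding the conclusion back through (i) and (ii), yields $\limsup_{r\to0+}(F+D+E+A)(r)\le C(q,\Lambda_q)$ with $C(q,\Lambda_q)\to0$ as $\Lambda_q\to0$.

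I expect the main obstacle to be the first step: securing the interpolation inequality (i) with a genuinely sublinear exponent $\gamma<1$, which is precisely where the hypothesis $q>3/2$ is used in an essential way. The subsequent iteration is structurally standard, but some care is needed in passing to the $\limsup$ so that the final constant is governed by the fixed point of the recursion—hence small when $\Lambda_q$ is small—rather than by the a priori finite but otherwise uncontrolled initial scale $r_0$.
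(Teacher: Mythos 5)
Your proposal is correct, but it takes a genuinely different route from the paper: the paper's entire proof of this lemma is a one-line citation, ``this is a corollary of [SS, Theorem 6], by taking $s = q$ and $\frac{2q}{2q-3} < l < \frac{3q}{2q-3}$ there,'' i.e.\ it outsources the statement to Seregin and Sverak's general theorem that a scale-invariant Morrey bound on $v$ in mixed $L^l_t L^s_x$ norms forces boundedness (with quantitative smallness) of all the Caffarelli--Kohn--Nirenberg quantities, whereas you reprove that result from scratch in the case needed here ($s=q$, $l=\infty$). Your three ingredients check out: the interpolation bound $F(r)\le C(q)\Lambda_q^{3\alpha/q}\Psi(r)^{\gamma}$ is correct (with $\alpha=\frac{q}{6-q}$ the powers of $r$ do cancel, $3(1-\alpha)<2$ makes the H\"older step in time legitimate, and $\gamma=\frac{3(1-\alpha)}{2}<1$ exactly when $q>\frac{3}{2}$); the pressure decay (ii) is the standard harmonic-function plus Calder\'on--Zygmund decomposition estimate; the energy bound (iii) follows from the local energy inequality \eqref{eq:local-energy-ineq} exactly as you say; and the sublinear recursion with the limsup passage correctly produces a bound governed by the fixed point of the iteration, hence independent of the finite but uncontrolled initial energies and tending to $0$ as $\Lambda_q\to 0$, which is the ``moreover'' clause. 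What each approach buys: the paper gets brevity and the extra generality of the mixed-norm hypothesis (its parameter window $\frac{2q}{2q-3}<l<\frac{3q}{2q-3}$ corresponds to strictly subcritical Morrey conditions, all implied by your $L^{\infty}_t$ hypothesis), while your argument is self-contained, makes the mechanism transparent --- the strict sublinearity $\gamma<1$ coming from $q>\frac{3}{2}$ is precisely what breaks the supercritical scaling --- and exhibits the smallness of the final constant directly rather than inheriting it from the quantitative form of the cited theorem. In substance the two proofs are the same mathematics, since the cited theorem in [SS] is itself established by an interpolation-plus-iteration scheme of essentially this kind.
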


\begin{proof}
  This is a corollary of {\cite[Theorem 6]{SS}}, by taking $s = q$ and $\frac{2q}{2q
  - 3} < l < \frac{3q}{2q - 3}$ there. 
\end{proof}

\subsection{Quantitative partial regularity}\label{sec:22}

The classical Caffarelli-Kohn-Nirenberg theorem {\cite{CKN}} states that the
1D parabolic Hausdorff measure of the singular set of a suitable weak solution to \eqref{eq:NS}
must be zero. A particular corollary is the existence of regular (spatial or
temporal) intervals for suitable weak solutions. Indeed, it is not difficult
to show that for a suitable weak solution $v$ in $Q (1)$, there exist numbers
$$a \in \left( \frac{2}{3}, \frac{3}{4} \right),\quad  \delta \in \left( 0,
\frac{1}{10} \right)$$ 
such that $v$ is regular in the space-time closure of the parabolic
shell $Q (a + \delta) \setminus Q (a - \delta)$. However, the
Caffarelli-Kohn-Nirenberg theorem does not give an explicit lower bound on
$\delta$, nor any regularity estimates of $v$ in $Q (a + \delta) \setminus Q
(a - \delta)$.

Recently, a quantitatively improved partial
regularity theorem, along with the existence of quantitative intervals of regularity, is established in {\cite{LR}} using an energy pigeonholing
argument. This result will be useful for us in proving the local smallness of
vorticity fluxes, see Section \ref{sec:4}.

\begin{lemma}
  \label{thm:qrs}({\cite{LR}} Qantitative shells of regularity) Let $v$ be a
  suitable weak solution in $Q (1)$ and denote
  \[ G = \int_{Q (1)} \left( | v |^3 + | p |^{\frac{3}{2}} \right) d x d t + 2
     < + \infty . \]
  There exist numbers $a \in \left( \frac{2}{3}, \frac{3}{4} \right)$ and
  $\delta \in \left( G^{- O (G)}, \frac{1}{10} \right)$ such that $v$ is regular
  in the space-time closure of $Q (a + \delta) \setminus Q (a - \delta)$ with
  the estimates
  \begin{equation}
    \sup_{Q (a + \delta) \setminus Q (a - \delta)} (| v | + | \nabla v | + |
    \nabla^2 v |) < G^{O (G)} . \label{eq:reg-est-shell}
  \end{equation}
  Here, $O(G)$ stands for a positive number bounded by $G$ times a universal constant.
\end{lemma}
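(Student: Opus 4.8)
The plan is to combine a quantitative, localized form of the Caffarelli--Kohn--Nirenberg $\varepsilon$-regularity criterion with an energy-pigeonholing argument that locates a thin space-time shell, nested between the radii $2/3$ and $3/4$, on which the scale-invariant local energy is uniformly small at a definite scale. First I would record the quantitative $\varepsilon$-regularity ingredient: there is a universal $\varepsilon_0 > 0$ so that whenever a cylinder $Q(\rho; x_0, t_0) \subset Q(1)$ satisfies $\rho^{-2}\int_{Q(\rho;x_0,t_0)}(|v|^3 + |p|^{3/2})\,dx\,dt \le \varepsilon_0$, the solution is regular on $Q(\rho/2; x_0, t_0)$ with the scale-invariant bound $\sup_{Q(\rho/2;x_0,t_0)}(\rho|v| + \rho^2|\nabla v| + \rho^3|\nabla^2 v|) \le C$. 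This already fixes the shape of the conclusion: covering the shell by cylinders of radius $\rho \sim \delta$ yields $\sup(|v| + |\nabla v| + |\nabla^2 v|) \lesssim \rho^{-3}\sim \delta^{-3}$, so that the target estimate $G^{O(G)}$ and the lower bound $\delta \ge G^{-O(G)}$ are two faces of the same quantitative fact, linked through the common scale $\rho$.

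The core task is then to exhibit a radius $a \in (2/3, 3/4)$ and a scale $\rho \sim \delta$ for which the hypothesis of the $\varepsilon$-regularity statement holds \emph{simultaneously} at every point of the space-time shell surrounding $\{|x| = a\}$. This is where the energy pigeonholing enters: starting from $\int_{Q(1)}(|v|^3 + |p|^{3/2})\,dx\,dt \le G$, one partitions the radial interval $[2/3,3/4]$ into many disjoint shells and searches for one carrying a controllably small fraction of the budget $G$. The pressure requires care, being nonlocal: on each candidate shell I would split $p$ into a far-field part (estimated through $G$ and harmonic interior estimates) and a local Calder\'on--Zygmund part controlled by $|v|^2$ on a slightly fattened shell, so that smallness of $\int(|v|^3+|p|^{3/2})$ on a shell can be transferred to each covering cylinder with only bounded-overlap losses.

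The main obstacle is a genuine scaling mismatch. The $\varepsilon$-regularity threshold demands energy $\lesssim \varepsilon_0 \rho^2$ on \emph{each individual} cylinder of size $\rho$, whereas a single round of pigeonholing over shells of thickness $\sim\rho$ delivers only energy $\sim G\rho$ on the best shell, and $G\rho \le \varepsilon_0\rho^2$ fails once $G$ is large. Thus no soft, single-scale argument can succeed; what one must actually control is the \emph{supremum} of the localized energy over an entire sphere, not merely its radial average, and this forces a multiscale treatment. The resolution is a quantitative partial regularity theorem proved by an iterated CKN-type energy-decay argument with \emph{explicit} constants: one shows that, once the scale-invariant energy is small on a cylinder at some scale, it remains controllably small at all smaller scales, so that the points which stay bad at every scale (the singular set) can be covered by parabolic cylinders of explicitly small total size. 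It is the compounding of the iteration over $\sim G$ stages, each multiplying the accumulated constants by a controlled factor, that produces the super-polynomial bounds; this step is the technical heart and the source of the $G^{O(G)}$ and $G^{-O(G)}$ appearing in the statement.

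Granting this quantitative partial regularity, the remainder is routine. Pigeonholing over $a \in (2/3,3/4)$ produces a radius whose sphere lies at distance $\ge \delta$ from the explicit covering of the singular set, with $\delta \ge G^{-O(G)}$; on the shell $Q(a+\delta)\setminus Q(a-\delta)$ the scale-invariant energy is then below $\varepsilon_0$ at scale $\rho\sim\delta$ in every covering cylinder $Q(\rho; x_j, t_j)$. Applying the quantitative $\varepsilon$-regularity statement on each such cylinder and collecting the resulting bounds gives regularity in the space-time closure of the shell together with the estimate \eqref{eq:reg-est-shell}, completing the proof.
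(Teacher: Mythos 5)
Your skeleton (quantitative $\varepsilon$-regularity plus energy pigeonholing, plus the correct observation that single-scale pigeonholing fails by a scaling mismatch) is consistent in spirit with the cited reference, but your proposal has a genuine gap at its decisive step. The paper itself does not prove Lemma \ref{thm:qrs}: it imports it by combining the spatial annuli of regularity of \cite[Theorem B]{LR} with the (much easier) temporal epochs of \cite[Proposition 9]{LR}. In your sketch, the analogous content is the ``quantitative partial regularity theorem proved by an iterated CKN-type energy-decay argument with explicit constants,'' which you then grant and call the remainder routine. Granting that theorem is granting the lemma. Moreover, the mechanism you describe for it --- once the scale-invariant energy is small at some scale it stays small at smaller scales --- is just the $\varepsilon$-regularity ingredient of your first paragraph; it propagates goodness downward but cannot bound the size of the bad set. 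What is actually needed is the reverse accumulation: if for every one of $\sim G/\varepsilon_0$ suitably separated scales every candidate annulus contains a cylinder with scale-invariant energy $\ge \varepsilon_0$, one must extract from these bad cylinders a pairwise disjoint family whose raw energies add up and overrun the budget $G$. Since bad cylinders at different scales nest (and nested energies do not add), arranging this disjointness across scales, with explicit constants, is the entire technical heart and the true source of the $G^{\pm O(G)}$; your sketch does not contain it. A related slip: distance $\ge \delta$ from a covering of the \emph{singular} set gives qualitative regularity but no quantitative bounds; the covering must be of the set of points that are bad at every scale in $\left( G^{-O(G)}, \tfrac{1}{10} \right)$, so that each point of the shell is good at some scale $\ge G^{-O(G)}$ and $\varepsilon$-regularity applies there with bounds $\lesssim G^{O(G)}$.

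There is also a concrete geometric omission. The parabolic shell $Q(a+\delta)\setminus Q(a-\delta)$ is not a neighborhood of the sphere $\{|x|=a\}$: besides the spatial annulus $\{a-\delta<|x|<a+\delta\}\times(-(a+\delta)^2,0)$ it contains the temporal cap $B(a-\delta)\times(-(a+\delta)^2,-(a-\delta)^2]$, i.e.\ the \emph{whole} ball --- including the spatial origin --- in a slab of times around $t=-a^2$. No pigeonholing in the radius alone can control this region, since it is at distance $O(1)$ from the sphere in space. One needs a separate epoch-of-regularity argument in time (pigeonhole the dissipation over time intervals to find $t_0$ with $\|\nabla v(t_0)\|_{L^2}^2 \lesssim G$, then propagate regularity forward on a slab of length $\gtrsim G^{-O(1)}$), and one must then arrange that the spatial radius and the temporal level $-a^2$ are achieved by the \emph{same} $a$. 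This is precisely why the paper's deduction combines \cite[Theorem B]{LR} with \cite[Proposition 9]{LR}; your proposal addresses only the annulus half of the shell.
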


To be precise, quantitative intervals of regularity  in the spatial direction is stated
in {\cite[Theorem B]{LR}}, and the corresponding (but much easier) result in the
temporal direction is stated in {\cite[Proposition 9]{LR}}. Combining these
results, it is easy to deduce Lemma \ref{thm:qrs}. Moreover, by simple
modifications in the proof (see {\cite[Remark 8]{LR}}), one can replace $B
(r), Q (r)$ with $\mathcal{B} (r), \mathcal{Q} (r)$ in the statement of Lemma \ref{thm:qrs}.

%Based on  compactness arguments and the classical partial regularity theorem, it is possible to prove a non-quantitative version of Lemma \ref{thm:qrs} replacing $G^{O(G)}$ by a non-explicit function of $G$. In fact, such a weaker version is also sufficient for our purpose in Section \ref{sec:4}, but the arguments would fail to produce any explicit regularity estimates without using \cite{LR}.

\subsection{A lemma from De Giorgi-Nash-Moser theory}\label{sec:23}

\begin{lemma}
  \label{lem:dnm}Suppose $V (x, t)$ is a nonnegative Lipshitz supersolution in
  $B (1) \times (0, T)$ to the equation
  \begin{equation}
    \partial_t V - \Delta V + b \cdot \nabla V = 0 \label{eq:drift-diffusion}
  \end{equation}
  where the drift $b \in L^{\infty} (B (1) \times (0, T))$ and $\nabla \cdot b
  = 0$. Assume that
  \[ \mbox{meas}\, (\{ x \in B (1) : V (x, \bar{t}) \geqslant \lambda \})
     \geqslant \delta \]
  for some $\bar{t} \in \left( 0, \frac{T}{3} \right)$, $\lambda > 0$ and
  $\delta > 0$, then given any $0 < r < 1$ there exists a constant $\beta =
  \beta (\delta, T, r, \| b \|_{L^{\infty} (B (1) \times (0, T))}) > 0$ such
  that
  \[ V \geqslant \beta  \lambda \]
  in $B (r) \times \left( \frac{T}{2}, T \right)$.
\end{lemma}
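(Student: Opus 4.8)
The plan is to reduce the statement to the classical parabolic weak Harnack inequality for nonnegative supersolutions, after recasting \eqref{eq:drift-diffusion} in divergence form. First, by the linearity and homogeneity of \eqref{eq:drift-diffusion}, the function $V/\lambda$ is again a nonnegative Lipschitz supersolution, so we may normalize $\lambda = 1$; the hypothesis then reads $\mathrm{meas}(\{x \in B(1) : V(x,\bar t) \geqslant 1\}) \geqslant \delta$, and it suffices to produce a lower bound $V \geqslant \beta$ on $B(r) \times (T/2, T)$. Next, using $\nabla \cdot b = 0$ we have $\nabla \cdot (bV) = b \cdot \nabla V$, so \eqref{eq:drift-diffusion} can be written in divergence form
\[ \partial_t V - \nabla \cdot (\nabla V - bV) \geqslant 0, \]
which exhibits $V$ as a nonnegative supersolution of a uniformly parabolic equation whose lower-order coefficients are controlled by $\| b \|_{L^{\infty}(B(1)\times(0,T))}$. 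Consequently the full De Giorgi--Nash--Moser machinery applies, with all constants depending only on the dimension and $\| b \|_{L^{\infty}}$.

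The core of the argument is to convert the single-time measure bound at $t = \bar t$ into a genuine space-time integral lower bound, so that a weak Harnack inequality can be invoked. To this end I would run a De Giorgi-type energy estimate on the truncations $(k - V)_+$ at levels $k \leqslant 1$: since a supersolution cannot instantaneously collapse, the control of the evolution of $\int_{B(1/2)} (k-V)_+^2 \, dx$ by the Dirichlet energy shows that the set $\{V \geqslant \epsilon\}$ retains measure bounded below on a definite time interval $(\bar t, \bar t + \tau)$, with $\epsilon, \tau$ depending on $\delta$ and the parabolic geometry. This yields, for some fixed exponent $p_0 > 0$,
\[ \int_{\bar t}^{\bar t + \tau} \int_{B(1/2)} V^{p_0}\, dx\, dt \geqslant \eta > 0, \]
with $\eta = \eta(\delta, \| b \|_{L^{\infty}})$. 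Because $\bar t < T/3$, the interval $(\bar t, T)$ contains a fixed number of overlapping parabolic cylinders reaching up to $B(r) \times (T/2, T)$, the number depending only on $T$ and the fixed radii.

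Finally, I would apply the parabolic weak Harnack inequality for nonnegative supersolutions---in the form $\big(\tfrac{1}{|Q^-|}\int_{Q^-} V^{p_0}\big)^{1/p_0} \leqslant C \inf_{Q^+} V$ for a past cylinder $Q^-$ and a future cylinder $Q^+$, with $C$ depending on the dimension and $\| b \|_{L^{\infty}}$---and chain it across the finite sequence of cylinders from time $\bar t + \tau$ up to time $T$. Combined with the integral lower bound of the previous step, this propagates positivity forward and gives $\inf_{B(r) \times (T/2, T)} V \geqslant \beta$ for some $\beta = \beta(\delta, T, r, \| b \|_{L^{\infty}}) > 0$; undoing the normalization yields $V \geqslant \beta \lambda$. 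The main obstacle is the measure-propagation step: the hypothesis controls only a single time slice, whereas the weak Harnack inequality requires a space-time integral, so one must carefully carry out the expansion of positivity in measure and track the quantitative dependence of $\epsilon$, $\tau$ and $\eta$ on $\delta$. If instead one cites a version of the weak Harnack inequality that already admits an initial time slice on its left-hand side, this propagation step can be absorbed directly into the Harnack chaining.
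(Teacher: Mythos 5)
Your outline is mathematically sound, but note that the paper itself does not prove this lemma at all: its entire ``proof'' is the citation \cite[Corollary 3.3]{NU}, which is a growth lemma of exactly this time-slice-to-future-cylinder form for parabolic equations with divergence-free drift. Your proposal therefore takes a genuinely different route, reconstructing the statement from classical machinery, and the individual steps are correct: the normalization $\lambda=1$; the rewriting $b\cdot\nabla V=\nabla\cdot(bV)$ (the only place $\nabla\cdot b=0$ is actually needed once $b\in L^{\infty}$); the propagation of the measure hypothesis from the slice $t=\bar t$ to a short time interval $(\bar t,\bar t+\tau)$ via energy estimates on the truncations $(k-V)_+$ (this is the standard ``expansion of positivity'' lemma, see e.g.\ DiBenedetto's book); and the chaining of the Aronson--Serrin/Trudinger weak Harnack inequality through finitely many overlapping cylinders, which is legitimate because $\bar t<T/3$ leaves a waiting time of at least $T/6$ before the target region $B(r)\times\left(\frac{T}{2},T\right)$. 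The resulting constant depends on $(\delta,T,r,\|b\|_{L^{\infty}})$ exactly as required, since $\tau$ and hence the number of Harnack steps depends on $\delta$ and $\|b\|_{L^{\infty}}$, while the spatial chaining up to $B(r)$ costs a factor depending on $1-r$. What the paper's citation buys is brevity and extra strength: the result of \cite{NU} is designed for drifts far rougher than $L^{\infty}$, where the divergence-free structure is essential. What your route buys is self-containedness, plus the observation that for bounded drift the full force of \cite{NU} is unnecessary --- classical weak Harnack theory (or even Krylov--Safonov in non-divergence form) suffices. If you were to write this up completely, the measure-propagation step is the only one requiring real work rather than quotation of a standard theorem, and you correctly identify it as the main obstacle.
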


The exact choice of domains in Lemma \ref{lem:dnm} can be rather arbitrary, as long as the relation $\bar{t}<\frac{T}{2}$ is respected. For the proof of this lemma, we refer to {\cite[Corollary 3.3]{NU}}. In
Section \ref{sec:4}, we shall apply it to the axi-symmetric function $V =
\sup_{\mathcal{Q} \left( \frac{9}{10} \right)} \Gamma - \Gamma$ where
\[ \Gamma (r, z, t) = \int_{\mathcal{D} (r, z, t)} | \omega_3 (x_h, z, t) | d
   x_1 d x_2 . \]
It turns out, under the geometric condition \eqref{eq:888} on the
vorticity direction, $\Gamma$ is a subsolution to a drift-diffusion equation
with suitable drift (see \eqref{eq:Gamma-34}) inside the shells of regularity  given
by Lemma \ref{thm:qrs}.

\section{Boundedness of the vorticity flux}\label{sec:3}

In this section, we obtain the uniform boundedness of the local absolute
vorticity fluxes using the assumption \eqref{eq:cone-3}. Let $v$ be a local
suitable weak solution to \eqref{eq:NS} in $\mathcal{Q} (1)$. Here, we shall use the definitions and lemmas from Section \ref{sec:2} with cylindrical
domains $\mathcal{B} (r)$, $\mathcal{Q} (r)$ instead of $B (r), Q (r)$.

First of all, by a simple reduction, in Theorem \ref{thm:main} we can assume
without loss of generality that $v$ is regular for all negative times. Indeed,
let $a, \delta>0$ be the numbers guaranteed by Lemma \ref{thm:qrs} so that $v$
is regular in $\overline{\mathcal{Q} (a + \delta) \setminus \mathcal{Q} (a -
\delta)}$. If $v$ is regular in $\overline{\mathcal{Q} (a)}$, then there is
nothing to prove. Otherwise, let $(x_{\ast}, t_{\ast})$ be one of the earliest
blow-up points of $v$ in $\mathcal{Q} (a)$. The translated and rescaled
solution
\[ \tilde{v} (x, t) = \frac{\delta}{10} v \left( \frac{\delta x}{10}  +
   x_{\ast}, \frac{\delta^2 t}{100}  + t_{\ast} \right) \]
would still satisfy the conditions of Theorem \ref{thm:main} and, by construction, it
is regular for all negative times. Then,  it suffices to show the regularity of
$\tilde{v}$ at the origin to obtain a contradiction.

In the sequel, we always assume that $v$ is regular for all negative times. Let us start with a simple observation in a special case, which will be
important for Section \ref{sec:5}. Assume for now that \eqref{eq:cone-3}
holds with $M = 0$, that is, for any point in $\mathcal{Q}(1)$,
\begin{equation}
  | \omega | \leqslant C | \omega_3 | . \label{eq:cone-3-0}
\end{equation}
We define the flipped vorticity $\tilde{\omega} = \frac{\omega_3}{| \omega_3 |} \omega$
and its approximations $\tilde{\omega}^{(\epsilon)} = \frac{\omega_3}{\sqrt{\omega_3^2 +
\epsilon^2}} \omega$. By construction, at a fixed negative time $\tilde{\omega}^{(\epsilon)}
\rightarrow \tilde{\omega}$ uniformly as $\epsilon \rightarrow 0$. One can explicitly compute
\[ \nabla \tilde{\omega}^{(\epsilon)} = \frac{\omega_3}{\sqrt{\omega_3^2 +
   \epsilon^2}} \nabla \omega + \nabla \frac{\omega_3}{\sqrt{\omega_3^2 +
   \epsilon^2}} \otimes \omega \]
\[ = \frac{\omega_3}{\sqrt{\omega_3^2 + \epsilon^2}} \nabla \omega +
   \frac{\epsilon^2}{(\omega_3^2 + \epsilon^2)^{\frac{3}{2}}} \nabla \omega_3
   \otimes \omega, \]
and observe that, as $\epsilon \rightarrow 0$,
\[ \nabla \tilde{\omega}^{(\epsilon)} \rightarrow \frac{\omega_3}{| \omega_3
   |} \nabla \omega \]
on the set  \{$| \omega_3 | \neq 0$\}. Due to \eqref{eq:cone-3-0}, we have
$$\nabla \tilde{\omega}^{(\epsilon)} \in L^\infty_{\text{loc}}(\mathcal{Q}(1)),$$
and
\[ \nabla \tilde{\omega}^{(\epsilon)} = 0 \]
on the set $\{ | \omega_3 | = 0 \}$. Taking the limit $\epsilon \to 0$, we obtain that 
\[ \nabla \tilde{\omega} = {{\mathbf{1}_{\{ | \omega_3 | \neq 0 \}}} } 
    \frac{\omega_3}{| \omega_3 |} \nabla \omega, \]
in the weak sense and, in particular,
\begin{equation}
  \nabla \cdot \tilde{\omega} = {{\mathbf{1}_{\{ | \omega_3 | \neq 0 \}}} } 
   \frac{\omega_3}{| \omega_3 |} \nabla \cdot \omega = 0.
  \label{eq:tilde-div-free}
\end{equation}
Let $a, \delta>0$ be the numbers guaranteed by Lemma \ref{thm:qrs} for our
solution $v$. Using \eqref{eq:tilde-div-free} we deduce that, uniformly in $t < 0$ and $z
\in (- a, a)$,
\begin{eqnarray*}
  \int_{\mathcal{D} (a, z, t)} | \omega_3 | d x_1 d x_2 & = &
  \int_{\mathcal{D} (a, z, t)} \tilde{\omega}_3 d x_1 d x_2\\
  & = & \int_{\mathcal{D} (a, a, t)} \tilde{\omega}_3 d x_1 d x_2 + \int_z^a
  \int_{\mathcal{S} (a, \zeta, t)} \tilde{\omega} \cdot e_r d s d \zeta\\
  & \leqslant & G^{O(G)},
\end{eqnarray*}
where we have used \eqref{eq:reg-est-shell} in the last step. Hence, using
\eqref{eq:cone-3-0} again we obtain a uniform bound on $\omega$ given by
\begin{equation}
  \sup_{- \frac{1}{4} < t < 0} \sup_{- \frac{1}{2} < z < \frac{1}{2}}
  \int_{\mathcal{D} \left( \frac{1}{2}, z, t \right)} | \omega | d x_1 d x_2
  \leqslant G^{O(G)}. \label{eq:unif-omega}
\end{equation}
Note that the left hand side of \eqref{eq:unif-omega} is dimensionless with
respect to the Navier-Stokes scaling.

Next, consider the general case: \eqref{eq:cone-3} wih $M > 0$. The above
argument breaks down due to the lack of effective estimates for regions with
small vorticity magnitude. Nonetheless, we shall obtain the same critical
bound \eqref{eq:unif-omega} using the vorticity equation \eqref{eq:vorticity}
and a variation of the above observation \eqref{eq:tilde-div-free}. For this
purpose, structural cancellations between the advection term $v \cdot \nabla
\omega$ and the stretching term $\omega \cdot \nabla v$ in
\eqref{eq:vorticity} will be essential.

\begin{lemma}
  \label{lem:flux}Let  ($v$,p)  be a suitable weak solution to the
  Navier-Stokes equations in $\mathcal{Q} (1)$, regular for negative times, satisfying \eqref{eq:cone-3},
  then \eqref{eq:unif-omega} holds.
\end{lemma}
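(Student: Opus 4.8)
The plan is to bound, uniformly in $t<0$ and $z$, the flux $\Phi(z,t):=\int_{\mathcal{D}(a,z,t)}\widehat{\omega}_3\,dx_h$ of the modified field $\widehat{\omega}=f(\omega_3)\,\omega$ with $f(s):=s/\sqrt{s^2+1}$, where $a\in(2/3,3/4)$ is the shell radius from Lemma~\ref{thm:qrs}. Since $\widehat{\omega}_3=f(\omega_3)\omega_3=\omega_3^2/\sqrt{\omega_3^2+1}\ge|\omega_3|-1$ and, by \eqref{eq:cone-3}, $|\omega|\le C|\omega_3|+M$, a uniform bound $\sup_{z,t}\Phi\le G^{O(G)}$ for $z\in(-\tfrac12,\tfrac12)$ immediately yields \eqref{eq:unif-omega}. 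The difficulty compared with the special case \eqref{eq:cone-3-0} is that $\widehat{\omega}$ is no longer divergence free; a direct computation gives $\nabla\cdot\widehat{\omega}=f'(\omega_3)\,\omega\cdot\nabla\omega_3$, but the weight $f'(\omega_3)=(\omega_3^2+1)^{-3/2}$ decays like $|\omega_3|^{-3}$, so $\widehat{\omega}$ is ``almost'' divergence free precisely in the high-vorticity region where the cone condition bites.

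First I would compute the evolution of $\widehat{\omega}$ from the vorticity equation \eqref{eq:vorticity}. Writing $\widehat{\omega}=f(\omega_3)\omega$ and inserting \eqref{eq:vorticity} for $\omega$ and $\omega_3$, the advective terms reassemble into $-v\cdot\nabla\widehat{\omega}$, while the stretching produces the frozen-in term $\widehat{\omega}\cdot\nabla v$ plus an explicit remainder $R$ that is quadratic in $\nabla\omega_3$ and weighted by the decaying factors $f'(\omega_3)$ and $f''(\omega_3)$. Taking the third component and integrating over the disc $\mathcal{D}(a,z,t)$, I would use the algebraic identity $v_3\widehat{\omega}_3-\widehat{\omega}_3 v_3=0$ --- the same cancellation behind the Kelvin-Helmholtz law --- so that advection and stretching collapse into a lateral flux through the cylinder $\mathcal{S}(a,\cdot,t)$, while the viscous term yields $\partial_z^2\Phi$ together with a boundary flux $\oint_{\mathcal{S}}\partial_r\widehat{\omega}_3$. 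Since $\mathcal{S}(a,z,t)$ and $\mathcal{D}(a,\pm a,t)$ lie on the regular shell, all these boundary contributions are bounded by $G^{O(G)}$ via \eqref{eq:reg-est-shell}. This leaves the \emph{defect} $\int_{\mathcal{D}}(R_3-v_3\,\nabla\cdot\widehat{\omega})\,dx_h$, which after an integration by parts reduces, modulo further shell terms, to the viscous-defect integral $-\int_{\mathcal{D}}F''(\omega_3)\big(|\nabla\omega_3|^2+v_3\,\omega\cdot\nabla\omega_3\big)\,dx_h$, where $F(s)=s^2/\sqrt{s^2+1}$.

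The heart of the matter, and the main obstacle, is this defect. The profile $F$ is not convex --- indeed $F''(s)=(2-s^2)(s^2+1)^{-5/2}$ changes sign --- so the integrand has no favorable sign, and it contains $\nabla\omega_3$, which is not controlled inside $\mathcal{Q}(a)$ near a potential singularity. I would resolve this by generating dissipation from the equation itself: testing the scalar equation $\partial_t\omega_3-\Delta\omega_3+v\cdot\nabla\omega_3=\omega\cdot\nabla v_3$ against the \emph{bounded} function $f(\omega_3)$ (with a space-time cutoff) produces the good term $\int(\omega_3^2+1)^{-3/2}|\nabla\omega_3|^2=\int|\nabla G(\omega_3)|^2$, where $G'(s)=(s^2+1)^{-3/4}$ and $G$ is bounded. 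Crucially, since $f'$ and $f''$ decay like $|\omega_3|^{-3}$ and $|\omega_3|^{-4}$, the dangerous derivative terms --- including the stretching contribution $v_3\,\omega\cdot\nabla\omega_3$, estimated via $|\omega|\le C|\omega_3|+M$ --- can be absorbed into this dissipation by Young's inequality, the surviving terms being controlled by $\sup_t\int_{\mathcal{B}}|v|^2$ from \eqref{eq:finite-local-energy} and by the shell estimates. This bounds the defect by $G^{O(G)}$.

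Finally I would assemble these facts into a differential inequality for $\Phi$ of the form $\partial_t\Phi-\partial_z^2\Phi\le G^{O(G)}+(\text{defect})$ on $(z,t)\in(-a,a)\times(t_\ast,0)$, with the shell regularity furnishing boundary data at $z=\pm a$ and the assumed regularity for negative times furnishing initial data. A parabolic comparison argument in the $z$-variable then upgrades the space-time integrated control of the defect into the pointwise-in-$z$ bound $\sup_{z,t}\Phi\le G^{O(G)}$, which is exactly \eqref{eq:unif-omega}. Throughout, the precise algebraic form of $\widehat{\omega}$ and the cone condition \eqref{eq:cone-3} are what make the Kelvin-Helmholtz cancellation and the absorption of the defect into the dissipation succeed.
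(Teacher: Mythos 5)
Your algebra is sound and you have correctly located the crux: with $\widehat{\omega}_3=F(\omega_3)$, $F(s)=s^2/\sqrt{s^2+1}$, one indeed has $F''(s)=(2-s^2)(s^2+1)^{-5/2}$, the Kelvin--Helmholtz cancellation survives in the bounded combination $F(s)-sF'(s)=-s^2(s^2+1)^{-3/2}$, the lateral boundary terms are controlled by Lemma \ref{thm:qrs}, and your auxiliary estimate $\int\int f'(\omega_3)|\nabla\omega_3|^2\phi\,dx\,dt\le G^{O(G)}$ (testing \eqref{eq:omega-3} against the bounded function $\omega_3(\omega_3^2+1)^{-1/2}$ times a cutoff) is a legitimate energy bound. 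The gap is in your last step. That auxiliary estimate controls the defect $D(z,t):=\int_{\mathcal{D}(a,z,t)}(\omega_3^2+1)^{-3/2}|\nabla\omega_3|^2\,dx_h$ only in $L^1$ of $(z,t)\in(-a,a)\times(-a^2,0)$, and an $L^1_{z,t}$ source cannot be ``upgraded by parabolic comparison'' to a pointwise bound for the inequality $\partial_t\Phi-\partial_z^2\Phi\le G^{O(G)}+D$: the 1D heat kernel is unbounded, so a nonnegative source of unit mass concentrated at scale $\epsilon$ around a point $(z_0,t_0)$ drives the solution to size $\epsilon^{-1}$ there. Since $D$ is built from $\nabla\omega_3$, which is exactly what is uncontrolled near a potential singularity, such concentration cannot be excluded; and $F''<0$ precisely in the high-vorticity region $|\omega_3|>\sqrt{2}$, so the defect enters with the unfavorable sign exactly where it matters. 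Upgrading $D$ to $L^2_{t,z}$ or $L^\infty_tL^1_z$ would require sup-in-time control of a dissipation quantity, which no energy argument provides. (A minor additional slip: the quantitative ``initial data'' at $t=-a^2$ must come from the temporal slab of the regular shell in Lemma \ref{thm:qrs}; the assumed regularity for negative times is only qualitative and gives no $G^{O(G)}$ bound.)

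The paper's proof avoids this issue entirely by choosing a \emph{convex} profile: it works with $W(z,t)=\int_{\mathcal{D}(a,z,t)}f(\omega_3)\,dx_h$, $f(s)=\sqrt{s^2+1}$, i.e.\ it multiplies \eqref{eq:omega-3} by $f'(\omega_3)=\omega_3/f(\omega_3)$. Since $f''=f^{-3}>0$, the chain-rule viscous term $+|\nabla\omega_3|^2/f(\omega_3)^3$ in \eqref{eq:f-omega-3} sits on the favorable side of the inequality as dissipation: it never needs to be estimated, and is instead \emph{used}, via Cauchy--Schwarz, to absorb the cross term $-(\omega_h\cdot\nabla_h\omega_3)f(\omega_3)^{-3}v_3$ arising from the same horizontal integration by parts you perform (see \eqref{eq:W-rhs}). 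The surviving sources are $\int_{\mathcal{D}}(|\partial_3v_3|+Cv_3^2)\,dx_h$, which lie in $L^2_{t,z}$ and $L^\infty_tL^1_z$ respectively, and both of those classes do convolve boundedly against the 1D heat kernel, unlike $L^1_{z,t}$. Since $\sqrt{s^2+1}$ and $s^2/\sqrt{s^2+1}$ differ by a bounded function, bounding $W$ is equivalent to bounding your $\Phi$; the entire difference between the two arguments is the sign of the second derivative of the profile, and with your choice that sign is fatal. Replacing your $F$ by the convex $f$ (or by $F(s)+2\sqrt{s^2+1}$, which is also convex) repairs the argument and reduces it to the paper's proof.
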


\begin{proof}
  Denote $f (s) = \sqrt{s^2 + 1}$ and note that $f'(s) = \frac{s}{f(s)}$, $f''(s) = \frac{1}{f(s)^3}$. Condition \eqref{eq:cone-3} implies that
  \begin{equation}
    | \omega | \leqslant C f (\omega_3) \label{eq:cone-f}
  \end{equation}
  for some positive constant $C$. Let $a, \delta>0$ be the numbers guaranteed by
  Lemma \ref{thm:qrs}. By \eqref{eq:cone-3} it suffices to show that
  \[ \sup_{t \in (- a^2, 0)} \sup_{z \in (- a, a)} \int_{\mathcal{D} (a, z,
     t)} f (\omega_3) d x_1 d x_2 < + \infty . \]
  It is well-known that $\omega_3$ satisfies the vorticity equation
  \begin{equation}
    \partial_t \omega_3 - \Delta \omega_3 + v \cdot \nabla \omega_3 - \omega
    \cdot \nabla v_3 = 0. \label{eq:omega-3}
  \end{equation}
  Multiplying \eqref{eq:omega-3} by $\frac{\omega_3}{f (\omega_3)}$, we get
  \begin{equation}
    \partial_t f (\omega_3) - \Delta [f (\omega_3)] + \frac{| \nabla \omega_3
    |^2}{f (\omega_3)^3} + v \cdot \nabla [f (\omega_3)] - \frac{\omega_3
    \omega}{f (\omega_3)} \cdot \nabla v_3 = 0. \label{eq:f-omega-3}
  \end{equation}
  We introduce
  \[ W (z, t) = \int_{\mathcal{D} (a, z, t)} f (\omega_3) d x_1 d x_2, \]
  and
  \[ \mathcal{P}_a = \{ (z, t) : - a < z < a, - a^2 < t < 0 \} . \]
  Integrating \eqref{eq:f-omega-3} in $\mathcal{D} (a, z, t)$ for $(z, t) \in
  \mathcal{P}_a$, we get
  \[ \partial_t W - \partial_z^2 W + {\int_{\mathcal{D} (a, z, t)}}  \frac{|
     \nabla \omega_3 |^2}{f (\omega_3)^3} d x_1 d x_2 \]
  \begin{equation} \label{eq:114-1} \leqslant \int_{\mathcal{D} (a, z, t)} \left\{ - v \cdot \nabla [f
     (\omega_3)] + \frac{\omega_3 \omega}{f (\omega_3)} \cdot \nabla v_3
     \right\} d x_1 d x_2 + B_1 \end{equation}
     Here,  $B_1$ stands for the boundary term
       \[  \int_{\mathcal{S} (a, z, t)} \partial_r [f (\omega_3)] d s, \]
       with $\partial_r = e_r \cdot \nabla$ and $e_r = \frac{x_1 \partial_1 + x_2 \partial_2}{| x_h
       	|}$.   For the integral in \eqref{eq:114-1}, we perform integration by parts for the products of horizontal components while keeping the vertical ones, and deduce that
  \begin{align} \eqref{eq:114-1}&= \int_{\mathcal{D} (a, z, t)} \left\{ (\nabla_h \cdot v_h) f (\omega_3)
     - v_3 \partial_3 f (\omega_3) - \left( \nabla_h \cdot \frac{\omega_3
     \omega_h}{f (\omega_3)} \right) v_3 + \frac{\omega_3^2}{f (\omega_3)}
     \partial_3 v_3 \right\} d x_1 d x_2 \nonumber \\
    &\quad  + B_1 + B_2 \nonumber \\
  &= \int_{\mathcal{D} (a, z, t)} \left\{ - \partial_3 v_3 f (\omega_3) -
     \frac{v_3 \omega_3 \partial_3 \omega_3}{f (\omega_3)} + \left( -
     \frac{\omega_h \cdot \nabla_h \omega_3}{f (\omega_3)^3} + \frac{\omega_3
     \partial_3 \omega_3}{f (\omega_3)} \right) v_3 + \frac{\omega_3^2}{f
     (\omega_3)} \partial_3 v_3 \right\} d x_1 d x_2 \nonumber \\
  &\quad  + B_1 + B_2 \nonumber \\
    &= \int_{\mathcal{D} (a, z, t)} \left\{ - \frac{\partial_3 v_3}{f
    (\omega_3)} - \left( \frac{\omega_h \cdot \nabla_h \omega_3}{f
    (\omega_3)^3} \right) v_3 \right\} {dx}_1 {dx}_2 + B_1 + B_2.
    \label{eq:W-rhs}
  \end{align}
  Here, $B_2$ stands for the boundary term
  \[ \int_{\mathcal{S} (a, z, t)} \left\{ - f (\omega_3) v_h \cdot e_r  +
     \frac{v_3 \omega_3 \omega_h \cdot e_r}{f (\omega_3)} \right\} d s. \]
  Note that $B_1$ and $B_2$ can be bounded pointwisely for $(z, t) \in
  \mathcal{P}_a$ using Lemma \ref{thm:qrs}, namely,
  \begin{equation}
    | B_1 | + | B_2 | \leqslant G^{O(G)} . \label{eq:B1-B2}
  \end{equation}
  Using \eqref{eq:cone-f} and Cauchy-Schwarz inequality, the second part of the integral in
  \eqref{eq:W-rhs} can be bounded as
  \begin{equation}
    \int_{\mathcal{D} (a, z, t)} - \left( \frac{\omega_h \cdot \nabla_h
    \omega_3}{f (\omega_3)^3} \right) v_3 d x_1 d x_2 \leqslant
    \int_{\mathcal{D} (a, z, t)} \left\{ \frac{| \nabla_h \omega_3 |^2}{4 f
    (\omega_3)^4} + C v_3^2 \right\} d x_1 d x_2. \label{eq:W-rhs-cauchy}
  \end{equation}
  Combining \eqref{eq:114-1}--\eqref{eq:W-rhs-cauchy} we
  obtain that, for $(z, t) \in \mathcal{P}_a$,
  \[ \partial_t W - \partial_z^2 W \leqslant \int_{\mathcal{D} (a, z, t)} (|
     \partial_3 v_3 | + Cv_3^2) d x_1 d x_2 + G^{O(G)} . \]
  Since the local energies of $v$ are bounded by $O(G)$ by
  \eqref{eq:local-energy-ineq}, we have
  \[ \left\| \int_{\mathcal{D} (a, z, t)} | \partial_3 v_3 | d x_1 d x_2
     \right\|_{{L^2_t}  L^2_z (\mathcal{P}_a)} + \left\| \int_{\mathcal{D} (a,
     z, t)} v_3^2 d x_1 d x_2 \right\|_{{L^{\infty}_t}  L^1_z (\mathcal{P}_a)}
     \leqslant O(G). \]
  Moreover, using Lemma \ref{thm:qrs}, we have pointwise boundary estimates
  for $W$ given by
  \[ \sup_{- a^2 < t < 0} \sup_{j = \pm} W (j a, t) + \sup_{- a < z < a} W (z,
     - a^2) \leqslant G^{O(G)} . \]
  Hence, by standard estimates for 1D heat equation we obtain the desired
  uniform bound
  \[ \sup_{(z, t) \in \mathcal{P}_a} W(z,t) \leqslant G^{O(G)} . \]
  
\end{proof}

Next, we derive the uniform Type I bounds and apply the blow-up
procedure to $v$, based on Lemma \ref{lem:flux}.

\begin{lemma}
  \label{lem:Type-I}Under the assumption of Lemma \ref{lem:flux}, $(v, p)$
  satisfies the uniform Type I condition \eqref{eq:unif-Type-I} in
  $\mathcal{Q} \left( \frac{1}{2} \right)$.
\end{lemma}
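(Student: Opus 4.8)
\section*{Proof proposal for Lemma \ref{lem:Type-I}}

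The plan is to deduce the uniform Type I condition \eqref{eq:unif-Type-I} from Lemma \ref{lem:critical-Type-I}: it suffices to exhibit an exponent $q\in(\tfrac32,2)$ for which the scale-invariant Morrey quantity $r^{q-3}\int_{B(r;x_0,t_0)}|v|^q\,dx$ is bounded uniformly over base points $(x_0,t_0)$ and small scales $r$. The only input I will use is the conclusion \eqref{eq:unif-omega} of Lemma \ref{lem:flux}, i.e. the \emph{critical} bound $\Phi:=\sup_{t,z}\int_{\mathcal D(1/2,z,t)}|\omega|\,dx_1\,dx_2\le G^{O(G)}$, which is an $L^\infty_t L^\infty_{x_3} L^1_{x_h}$ estimate on $\omega$. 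Integrating the horizontal $L^1$ bound in $x_3$ gives $\int_{B(\rho)}|\omega|\,dx\le 2\rho\,\Phi$ for $\rho<\tfrac12$, a fact I will use repeatedly.

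First I would recover the velocity from the vorticity locally. Fixing a cutoff $\chi$ with $\chi\equiv1$ on $\mathcal B(3/8)$ and supported in $\mathcal B(1/2)$, I set $u=\nabla\times\big((-\Delta)^{-1}(\chi\omega)\big)$ and $h=v-u$. A short computation using $\nabla\cdot\omega=0$ shows that on $\mathcal B(3/8)$ one has $\nabla\cdot h=0$ and $\nabla\times h=-\nabla\big(\nabla\cdot(-\Delta)^{-1}(\chi\omega)\big)$, a gradient; hence $h$ is (componentwise) harmonic there for each fixed $t$. Interior estimates for harmonic functions then bound $\|h\|_{L^\infty(\mathcal B(1/4))}$ by $\|v\|_{L^1}+\|u\|_{L^1}$, controlled uniformly in $t$ by the local energy (hence by $G$) and by $\|\chi\omega\|_{L^1}\lesssim\Phi$ through the weak-$L^{3/2}$ mapping property of the Biot--Savart integral. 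Consequently the harmonic part contributes only $r^{q-3}\int_{B(r)}|h|^q\lesssim r^{q}\|h\|_\infty^q$, which is harmless, and the problem reduces to estimating $u$.

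The heart of the matter is an anisotropic estimate for $u$ respecting the $L^\infty_z L^1_{x_h}$ structure. A naive global bound fails: the slice estimate $\|u(\cdot,z)\|_{L^p_{x_h}}\lesssim\Phi$ (valid for $p<2$) interpolated against a disc of radius $r$ produces the wrong power of $r$, because it ignores the tail of $\omega$. Instead, to estimate $u$ on $B(r)$ I would split the source as $\chi\omega=\omega\mathbf{1}_{\mathcal B(2r)}+\omega\mathbf{1}_{\mathcal B(2r)^c}$, giving $u=u_{\mathrm{near}}+u_{\mathrm{far}}$. For the near part, Young's inequality in the horizontal variables with the kernel estimate $\big\|(|x_h|^2+s^2)^{-1}\big\|_{L^p_{x_h}}\sim|s|^{2/p-2}$, followed by integration in $x_3\in(-2r,2r)$, yields $\|u_{\mathrm{near}}(\cdot,z)\|_{L^p_{x_h}}\lesssim\Phi\,r^{2/p-1}$; H\"older on the disc and integration in $z$ then give
\[ r^{q-3}\int_{B(r)}|u_{\mathrm{near}}|^q\,dx\lesssim\Phi^q \]
for any $q<p<2$, the convergence of the $x_3$-integral being precisely what forces $p<2$. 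For the far part, $|x-y|\gtrsim|y|$ when $x\in B(r)$ and $y\in\mathcal B(2r)^c$, so a dyadic decomposition together with $\int_{B(\rho)}|\omega|\lesssim\rho\,\Phi$ gives the pointwise bound $\|u_{\mathrm{far}}\|_{L^\infty(B(r))}\lesssim\Phi/r$, whence $r^{q-3}\int_{B(r)}|u_{\mathrm{far}}|^q\lesssim\Phi^q$ as well. Choosing e.g. $q=\tfrac74$ and $p=\tfrac{15}{8}$ places $q$ in the admissible range $(\tfrac32,2)$ of Lemma \ref{lem:critical-Type-I}.

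Combining these gives $r^{q-3}\int_{B(r)}|v|^q\le G^{O(G)}$ for all small $r$, i.e. $\Lambda_q\le G^{O(G)}$ at the origin. Since the flux control $\int_{B(\rho;x_0,t_0)}|\omega|\lesssim\rho\,G^{O(G)}$ follows directly from \eqref{eq:unif-omega} for base points $(x_0,t_0)\in\mathcal Q(1/2)$ and small $\rho$ (the discs remaining inside the region of validity), the same estimate holds with a uniform constant at every such base point; the remaining larger scales $\rho$ are bounded directly by $G$ via \eqref{eq:finite-local-energy}. Lemma \ref{lem:critical-Type-I}, applied at each base point, then yields the uniform Type I bound \eqref{eq:unif-Type-I}. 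I expect the main obstacle to be the anisotropic Biot--Savart estimate: recognizing that the global velocity bound has the wrong scaling and that one must localize the vorticity at the scale $r$ and exploit the horizontal-$L^1$/vertical-$L^\infty$ splitting, with the borderline exponent $p<2$ dictated precisely by the vertical integration.
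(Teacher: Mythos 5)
Your proposal is correct, and its analytic core coincides with the paper's proof: both reduce the uniform Type I bound to the Morrey-type estimate $r^{q-3}\int_{\mathcal B(r;x_0,t_0)}|v|^q\,dx \le C(q)G^{O(G)}$ for some $\tfrac32<q<2$ and invoke Lemma \ref{lem:critical-Type-I}, and both prove that estimate by a near/far splitting of the Biot--Savart integral at scale $r$, with Young's inequality in the horizontal variables (kernel norm $\sim|z-\zeta|^{2/q-2}$, the constraint $q<2$ coming exactly from integrability in the vertical variable) for the near part and a pointwise $\Phi/r$-type bound from the flux for the far part; the paper phrases this after rescaling to unit scale ($J_1$, $J_2$ in \eqref{eq:J-1-2}), but the estimates are the same. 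Where you genuinely diverge is the treatment of the localization error. The paper cuts off the \emph{velocity}, $\tilde v=v\phi$, so that $\tilde v$ is compactly supported and hence equals the Newtonian potential of $\Delta\tilde v$ exactly; the price is the cross terms $-\omega\times\nabla\phi+2\nabla v\cdot\nabla\phi+v\Delta\phi$, which are controlled \emph{pointwise} by placing $\mathrm{supp}\,\nabla\phi$ inside the quantitative regularity shell of Lemma \ref{thm:qrs} and using \eqref{eq:reg-est-shell}. You instead cut off the \emph{vorticity}, $u=\nabla\times(-\Delta)^{-1}(\chi\omega)$, and observe that $h=v-u$ is harmonic on $\mathcal B(3/8)$ (indeed $\Delta h=-\nabla\times((1-\chi)\omega)=0$ there), so interior estimates for harmonic functions control it by $\|v\|_{L^1}+\|u\|_{L^1}$, i.e.\ by soft energy and weak-Young bounds. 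Your route buys independence of this lemma from Lemma \ref{thm:qrs} (which the paper uses twice here), at no real cost since Lemma \ref{lem:flux} already requires the shell anyway; the paper's route avoids any harmonic-remainder discussion because the compactly supported $\tilde v$ needs no Liouville/uniqueness step. One point to tighten: for base points with $|x_{0,h}|$ near $\tfrac12$, the discs $\mathcal D(2r,\cdot,\cdot)$ centered at $x_{0,h}$ are not contained in the radius-$\tfrac12$ discs of \eqref{eq:unif-omega} as literally stated; you need the flux bound on discs of radius $a>\tfrac23$, which is what the proof of Lemma \ref{lem:flux} actually delivers (the paper relies on this too, via its ``translation and scaling'' remark and its $J_2$ estimate over radius $a$), so this is a shared constant-bookkeeping issue rather than a gap.
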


\begin{proof}
  As before we write
  \[ G = \int_{\mathcal{Q} (1)} \left( | v |^3 + | p |^{\frac{3}{2}} \right)
     {dxdt} + 2. \]
  We will show that for any $\frac{3}{2} < q < 2$,
  \begin{equation}
    \sup_{(x_0, t_0) \in \mathcal{Q} \left( \frac{1}{2} \right)} \sup_{0 < r <
    \frac{1}{10}} \sup_{- r^2 < t - t_0 < 0} r^{q - 3} \int_{\mathcal{B} (r;
    x_0, t_0)} |v|^q d x \le  C (q) G^{O(G)}, \label{eq:critical-v}
  \end{equation}
  which implies Lemma \ref{lem:Type-I} via Lemma \ref{lem:critical-Type-I}. To
  prove \eqref{eq:critical-v}, we rely on \eqref{eq:unif-omega} and the
  Biot-Savart law. It is sufficient to consider the case $(x_0, t_0) = (0, 0)$,
  since by translation and scaling the estimates for general $(x_0, t_0) \in \mathcal{Q}
  \left( \frac{1}{2} \right)$ can be similarly deduced. Let $a, \delta>0$ be the numbers from
  Lemma \ref{thm:qrs}. Consider a smooth spatial cut-off function $\phi (x)$
  satisfying $\phi = 1$ for $|x| < a - \delta$ and $\phi = 0$ for $|x| > a$.
  Let $\tilde{v} = v \phi$, then using the fact that $\Delta v = - \nabla \times \omega$, we have
  \begin{align*}
    \Delta \tilde{v} & = \Delta v \phi + 2 \nabla v \cdot \nabla \phi + v
    \Delta \phi\\
    & = - \nabla \times (\omega \phi) - \omega \times \nabla \phi + 2 \nabla
    v \cdot \nabla \phi + v \Delta \phi .
  \end{align*}
  
  Introducing the rescalings
  \[ v^{(r)} (x, t) = r v (r x, r^2 t), \quad \phi^{(r)} (x) = \phi (r x), \]
  \[ \tilde{v}^{(r)} (x, t) = r \tilde{v} (r x, r^2 t), \quad \omega^{(r)} (x,t) = r^2
     \omega (r x,  r^2 t), \]
  with $0<r<\frac{1}{10}$, there holds that
  \begin{equation}
    \Delta \tilde{v}^{(r)} = - \nabla \times (\omega^{(r)} \phi^{(r)}) -
    \omega^{(r)} \times \nabla \phi^{(r)} + 2 \nabla v^{(r)} \cdot \nabla
    \phi^{(r)} + v^{(r)} \Delta \phi^{(r)} . \label{eq:tilde-v-r}
  \end{equation}
  By \eqref{eq:reg-est-shell} we can bound the Newtonian potential of the ``cross terms'' as
  \begin{equation}
    \sup_{|x| < 1} | \Delta^{- 1} (-\omega^{(r)} \times \nabla \phi^{(r)} + 2
    \nabla v^{(r)} \cdot \nabla \phi^{(r)} + v^{(r)} \Delta \phi^{(r)}) | \le
    G^{O(G)} r \le G^{O(G)}.
  \end{equation}
  For the first term on the right hand side of \eqref{eq:tilde-v-r},  at any negative time $t$ we have
  \begin{eqnarray}
    |\Delta^{- 1} \nabla \times (\omega^{(r)} \phi^{(r)}) (x) | & \lesssim &
    \int_{\mathbb{R}^3} \frac{| \omega^{(r)} \phi^{(r)} | (y)}{|x - y|^2} d y
    \nonumber\\
    & = & \int_{|y| \le 2} \frac{| \omega^{(r)} | (y)}{|x - y|^2} d y +
    \int_{|y| > 2} \frac{| \omega^{(r)} \phi^{(r)} | (y)}{|x - y|^2} d y
    \nonumber\\
    & = : & J_1 + J_2. \label{eq:J-1-2} 
  \end{eqnarray}
  By Young's inequality for convolutions, for any $- 1 \le z \le 1$, we have
  \begin{eqnarray}
    &  & \left( \int_{|x_h | \le 1} |J_1 (x_1, x_2, z) |^q d x_h
    \right)^{ \frac{1}{q}} \nonumber\\
    & \le &\left( \sup_{| \zeta | \le 2}  \int_{|x_h | \le 2} | \omega^{(r)} (x_1,
    x_2, \zeta) | d x_1 d x_2 \right) \left( \int_{| \zeta | \le 2} \left( \int (|x_h |^2 +
    |z - \zeta |^2)^{- q} d x_h \right)^{ \frac{1}{q}} d \zeta \right)
    \nonumber\\
    & \le & \left(\sup_{| \zeta | \le 2 r}  \int_{|x_h | \le 2 r} | \omega
    (x_1, x_2, \zeta) |d x_1 d x_2 \right) \left( C \int_{| \zeta | \le 2} |z - \zeta |^{- 2 +
    \frac{2}{q}} d \zeta \right) \nonumber\\
    & \leqslant & C (q) G^{O(G)} . \label{eq:J-1} 
  \end{eqnarray}
  For $J_2$ on $|x| \le 1$, since $|x-y|$ is separated from $0$ we can use a more direct bound:
  \begin{eqnarray}
    \sup_{|x| \le 1} |J_2 (x) | & \le & C   \sup_{\zeta \in \mathbb{R}} 
    \int_{\mathbb{R}^2} | \omega^{(r)} (x_1, x_2, \zeta) \phi^{(r)} | d x_1 d
    x_2 \nonumber\\
    & \le & C \sup_{| \zeta | \le a}  \int_{|x_h | \le a} | \omega (x_1,
    x_2, \zeta) | d x_1 d x_2 \nonumber\\
    & \leqslant & G^{O(G)} . \label{eq:J-2} 
  \end{eqnarray}
  Combining \eqref{eq:J-1-2}--\eqref{eq:J-2}, we arrive at
  \begin{equation}
    \sup_{- 1 < t < 0} \sup_{- 1 < z < 1}  \int_{\mathcal{D} (1, z, t)}
    |v^{(r)} |^q d x_1 dx_2 \le  C (q) G^{O(G)},
  \end{equation}
  which implies \eqref{eq:critical-v} by rescaling and H{\"o}lder's
  inequality.
\end{proof}

Based on Lemma \ref{lem:Type-I}, we can apply a standard blow-up procedure to
the solution $v$ at the origin. Consider a sequence of rescaled solutions
\[ v^{(r_k)} (x, t) = r_k v (r_k x, r_k^2 t), \quad p^{(r_k)} (x, t) = r_k^2 p
   (r_k x, r_k^2 t), \]
with $r_k \searrow 0$ as $k \to + \infty$. By {\cite[Proposition 2]{SS}},
there exist subsequences of $v^{(r_k)}, p^{(r_k)}$ (for simplicity, we still
denote them by $v^{(r_k)}, p^{(r_k)}$) such that, for each $R > 0$,
\[ v^{(r_k)} \to \hat{v} \]
in $L^3_t L^3_x (\mathcal{Q} (R))$ and
\[ p^{(r_k)} \rightharpoonup \hat{p} \]
in $L^{ \frac{3}{2}}_t {L^{\frac{3}{2}}_x} (\mathcal{Q}
(R))$, where $(\hat{v}, \hat{p})$ is a suitable weak ancient solution living
on $\mathbb{R}^3 \times (- \infty, 0)$. The scale-invariant quantities for
$(\hat{v}, \hat{p})$ (we denote by $\hat{F}$, $\hat{A}$, $\hat{E}$, $\hat{D}$, see
\eqref{eq:FE}--\eqref{eq:0115-1}) are bounded globally in $\mathbb{R}^3
\times (- \infty, 0)$, \emph{i.e.},
\[ \sup_{(x_0, t_0) \in \mathbb{R}^3 \times (- \infty, 0)} \sup_{r > 0} 
   (\hat{F} + \hat{A} + \hat{E} + \hat{D}) (r; x_0, t_0) < + \infty . \]
Moreover, $O$ is a singular point of $v$ if and only if the origin is a
singular point of $\hat{v}$. Let $\omega^{(r_k)} = \nabla \times v^{(r_k)}$
and $\hat{\omega} = \nabla \times \hat{v}$. By the assumption
\eqref{eq:cone-3}, at any regular points of $v^{(r_k)}$ we have
\[ | \omega^{(r_k)} | \le C| \omega^{(r_k)}_3 | + M r_k^2 \]
and hence in the limit, the vorticity $\hat{\omega} = \nabla \times \hat{v}$ satisfies \eqref{eq:cone-3} with $M =
0$, i.e., at any regular points of $\hat{v}$, we have
\begin{equation}
  \quad | \hat{\omega} | \le C | \hat{\omega}_3 | . \label{eq:limit-M0-34}
\end{equation}
To be precise, in deriving \eqref{eq:limit-M0-34} we need the persistence of
singularity result (see {\cite[Lemma 2.1]{RS}}), which implies that the
convergence of $\omega^{(r_k)} \rightarrow \hat{\omega}$ is locally uniform at
regular points of $\hat{v}$. In summary, we have reduced the proof of Theorem
\ref{thm:main} to the following weaker result:

\begin{proposition}
  \label{prop:main}Let $v$ be a suitable weak solution to the Navier-Stokes
  equations in $\mathcal{Q} (1)$. Suppose that \eqref{eq:cone-3-0} holds at
  all regular points of $v$, then $v$ is regular at the origin.
\end{proposition}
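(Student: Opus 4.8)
\section*{Proof proposal for Proposition \ref{prop:main}}

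The plan is to prove regularity at the origin by exhibiting a \emph{local decay} of the absolute vorticity flux, which supplies the sub-critical smallness needed to close the argument via Lemma \ref{lem:critical-Type-I}. Since \eqref{eq:cone-3-0} is the special case $M=0$ of \eqref{eq:cone-3}, Lemmas \ref{lem:flux} and \ref{lem:Type-I} already furnish the uniform flux bound \eqref{eq:unif-omega} and the uniform Type I condition; after rescaling (or passing to a Type I blow-up limit) I may therefore assume $v$ is regular for all negative times with globally controlled scale-invariant quantities. The central object is the axisymmetrized flux
\[
\Gamma(r,z,t)=\int_{\mathcal{D}(r,z,t)}|\omega_3|\,dx_1dx_2,
\]
which is nonnegative, bounded by $G^{O(G)}$ thanks to \eqref{eq:unif-omega}, vanishes on the axis $\{r=0\}$, and tends to $0$ there because at negative times every axis point is regular.

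The key step is to show that $\Gamma$ is a subsolution of a drift--diffusion equation. Writing the $\omega_3$--equation \eqref{eq:omega-3} in flux form $\partial_t\omega_3+\nabla\cdot(v\omega_3-v_3\omega)=\Delta\omega_3$, multiplying by $\mathrm{sgn}(\omega_3)$ (a Kato inequality absorbs the nonsmoothness of $|\cdot|$, and on $\{\omega_3=0\}$ the flipped current vanishes by \eqref{eq:cone-3-0}), and integrating over the disc $\mathcal{D}(r,z,t)$, the advection and stretching terms assemble into the divergence of the flipped current $v|\omega_3|-v_3\tilde\omega$, whose vertical component vanishes identically; the horizontal part integrates to a lateral boundary flux and no volume residue survives. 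This is precisely the viscous remnant of the Kelvin--Helmholtz law. After comparing the disc integral of $\Delta_h|\omega_3|$ with $\partial_r^2\Gamma-\tfrac1r\partial_r\Gamma$, one is left with
\[
\partial_t\Gamma-\Big(\partial_r^2-\tfrac1r\partial_r+\partial_z^2\Big)\Gamma
\le \oint_{\mathcal{S}(r,z,t)}\big(v_3\,\tilde\omega_h-|\omega_3|\,v_h\big)\cdot e_r\,ds.
\]
Because $|\omega_h|\le|\omega|\le C|\omega_3|$, the boundary term is bounded by $C\|v\|_{L^\infty(\mathcal{S})}\oint_{\mathcal{S}}|\omega_3|\,ds=C\|v\|_{L^\infty(\mathcal{S})}\,\partial_r\Gamma$, so $\Gamma$ satisfies $\partial_t\Gamma-\Delta\Gamma+b\cdot\nabla\Gamma\le0$ with an axisymmetric drift $b$ built from the geometric term $\tfrac2r e_r$ and a velocity-controlled radial term. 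Inside the shells of regularity of Lemma \ref{thm:qrs}, where $r$ is bounded away from $0$ and $|v|,|\omega|\le G^{O(G)}$, this drift is bounded. I expect the main obstacle to be exactly this matching: realizing the surviving boundary integrals together with the singular $\tfrac2r$ term as a drift that meets the boundedness and divergence-free hypotheses of Lemma \ref{lem:dnm}; the cancellation of all volume terms is the structural miracle that makes it feasible.

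With the subsolution property in hand I would apply Lemma \ref{lem:dnm} to the nonnegative supersolution $V=\sup_{\mathcal{Q}(9/10)}\Gamma-\Gamma$. Since $\Gamma$ vanishes on the axis, $V$ attains its maximum $\sup\Gamma$ there, and by continuity $V\ge\tfrac12\sup\Gamma$ on a full neighborhood of the axis of definite measure at a negative time slice. Lemma \ref{lem:dnm} then spreads this positivity, yielding $V\ge\beta\sup\Gamma$, i.e. a definite gap $\Gamma\le(1-\beta)\sup\Gamma$, on a smaller parabolic cylinder. The constants are scale invariant since $\Gamma$ is dimensionless and the Type I and shell estimates are uniform, so iterating the gap over dyadic scales gives $\sup_{\mathcal{Q}(\rho)}\Gamma\to0$ as $\rho\to0$ (equivalently, the flux of the ancient limit vanishes identically).

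Finally, the local decay of $\Gamma$ converts into sub-critical smallness. Running the Biot--Savart estimate from the proof of Lemma \ref{lem:Type-I} on the rescaled solutions, with the now-small flux, shows that $\Lambda_q$ at the origin tends to $0$; Lemma \ref{lem:critical-Type-I} then forces $\limsup_{r\to0}(F+D+E+A)(r)\to0$, and the Caffarelli--Kohn--Nirenberg $\varepsilon$-regularity criterion yields regularity at the origin, completing the proof.
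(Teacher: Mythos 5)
Your proposal follows essentially the same route as the paper: reduction via Lemmas \ref{lem:flux} and \ref{lem:Type-I} to a solution regular at negative times with uniform Type I bounds; a drift--diffusion inequality for $\Gamma$ obtained by integrating the $|\omega_3|$-equation over discs, with the Kelvin--Helmholtz-type cancellation leaving only circle integrals that are absorbed into a radial drift via $|\omega_h|\le C|\omega_3|$ (this matches \eqref{eq:Gamma-30} and \eqref{eq:Gamma-34}); De Giorgi--Nash--Moser applied to $V=\sup_{\mathcal{Q}(9/10)}\Gamma-\Gamma$; a dyadic iteration; and conversion of flux decay into $\varepsilon$-regularity. However, there is a genuine gap at the De Giorgi--Nash--Moser step. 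As you yourself observe, the supersolution inequality with a \emph{bounded}, divergence-free drift is available only inside the regular shells of Lemma \ref{thm:qrs}: bounding the surviving circle integral by $C\,\partial_r\Gamma$ requires $|v|$ to be bounded on $\mathcal{S}(r,z,t)$, and this fails in the inner region $\mathcal{Q}(a-\delta)$, which contains the potential singularity. Consequently, Lemma \ref{lem:dnm} cannot be invoked on any domain containing the ``smaller parabolic cylinder'' on which you assert the gap $\Gamma\le(1-\beta)\sup\Gamma$; positivity of $V$ can be propagated only within the shells. Two ingredients of the paper's argument are missing in your sketch: (i) a two-step application of Lemma \ref{lem:dnm}, first in the bottom time-slab $\mathcal{D}_1$, where the initial positivity set near the axis is quantified by the shell estimate \eqref{eq:reg-v-30} (giving $\Gamma\lesssim r^2$ with controlled constant, hence a positivity set of definite measure), propagating $V\ge c_1$ to the annulus $a-\delta<r<a$ at early times, and then in the lateral shell $\mathcal{D}_2$, propagating $V\ge c_2$ forward in time all the way up to $t=0$; and (ii) the monotonicity of $\Gamma$ in $r$ (it is an integral of $|\omega_3|\ge 0$ over growing discs), which is the only mechanism that transfers the bound from the ring $a-\tfrac{\delta}{2}<r<a+\tfrac{\delta}{2}$, $-\tfrac14<t<0$, into the full inner cylinder $\mathcal{Q}(9/20)$, where no PDE information whatsoever is available. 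Without (ii), the step from ``positivity spread along the shells'' to ``gap on a smaller parabolic cylinder'' simply does not exist.

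A second, smaller defect: your claim that the gap constant $\beta$ is scale-invariant ``since $\Gamma$ is dimensionless and the Type I and shell estimates are uniform'' is too quick. The measure of the initial positivity set near the axis is of order $r_*^2$, where $r_*$ is fixed by requiring the shell bound on $\Gamma$ to stay below $\tfrac12\sup\Gamma$; thus $\beta$ depends on $\sup\Gamma$ at the given scale and degenerates as $\sup\Gamma\to0$, so a direct dyadic iteration does not by itself produce decay. The paper closes this by contradiction: assuming $\sup_{\mathcal{Q}(r)}\Gamma\ge\kappa>0$ for \emph{all} $r\in(0,1)$, the radius $r_*=r_*(K,\kappa)$ and hence the gap $\sigma=\sigma(K,\kappa)$ in \eqref{eq:Gamma-sigma-iter} are uniform over scales, the iteration then gives geometric decay, and this contradicts the assumed lower bound. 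Your argument needs the same (easy but necessary) restructuring; with it, and with the two shell-spreading ingredients above, your outline becomes the paper's proof.
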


\paragraph{Proof of Theorem \ref{thm:main} via Proposition \ref{prop:main}.}By
Proposition \ref{prop:main}, the limiting solution $\hat{v}$ considered above
is regular at the origin. Hence, by {\cite[Proposition 2]{SS}}, $v$ is also
regular at the origin.

\section{Local smallness of the absolute vorticity flux}\label{sec:4}

In this Section, we prove Proposition \ref{prop:main} by establishing a local
decay property for the absolute vorticity flux. This is the most delicate part of our approach, providing the crucial smallness to break the scaling. Here, we still rely heavily on the structural cancellations between the two nonlinear terms in the vorticity equation.

\begin{proof}[Proof of Proposition \ref{prop:main}]
Let $v$ be a suitable weak solution to the Navier-Stokes equations in
$\mathcal{Q} (1)$ satisfying \eqref{eq:cone-3-0} at all regular points. By the
reduction argument in the second paragraph of Section \ref{sec:3}, we can
assume in addition that $v$ is regular at all negative times. By Lemma
\ref{lem:Type-I}, we have
\begin{equation}
  K := 2 + \sup_{0 < r < 1} \frac{1}{r^2} \int_{\mathcal{Q} (r)} \left( |
  v |^3 + | p |^{\frac{3}{2}} \right) d x d t < + \infty . \label{eq:K}
\end{equation}
Define $\tilde{\omega} = \frac{\omega_3}{| \omega_3 |} \omega$, and recall from Section \ref{sec:3}
that \eqref{eq:cone-3-0} implies

\begin{equation}
  \nabla \tilde{\omega} = \frac{\omega_3}{| \omega_3 |} \nabla \omega
\end{equation}
and
\begin{equation}
  \nabla \cdot \tilde{\omega} = 0.
\end{equation}
Denote the local absolute vorticity flux through discs by
\[ \Gamma (r, z, t) = \int_{\mathcal{D} (r, z, t)} | \omega_3 (x_h, z, t) | d
   x_1 d x_2 . \]
By Lemma \ref{lem:flux} (with slightly different choice of constants), we have an upper bound
\begin{equation}
  \sup_{\mathcal{Q} \left( \frac{9}{10} \right)} \Gamma < K^{O(K)}.
  \label{eq:Gamma-upper}
\end{equation}
It is useful to think of $\Gamma$ as an axi-symmetric function in three space
dimensions, independent of the angular variable $\theta$. Note that $\Gamma$ is an analogue of the swirl function $r v^\theta$ extensively used in the study of axisymmetric solutions.

Multiplying the
vorticity equation \eqref{eq:omega-3} by $\frac{\omega_3}{| \omega_3 |}$ we
get
\begin{equation}
  \partial_t | \omega_3 | - \Delta |\omega_3| + v \cdot \nabla | \omega_3 | -
  \tilde{\omega} \cdot \nabla v_3 \leqslant 0 \label{eq:abs-omega-3}
\end{equation}
in the sense of distributions.  Integration by parts in the horizontal variables gives
\begin{eqnarray}
	&  & \int_{\mathcal{D} (r, z, t)} \Delta | \omega_3 | d x_1 d x_2 \nonumber\\
	& = & \partial_z^2 \Gamma + \int_{\mathcal{S} (r, z, t)} \partial_r |
	\omega_3 | d s \nonumber\\
	& = & \partial_z^2 \Gamma + \partial_r \int_{\mathcal{S} (r, z, t)} |
	\omega_3 | d s - \frac{1}{r} \int_{\mathcal{S} (r, z, t)} | \omega_3 | d s \nonumber\\
	& = & \partial_z^2 \Gamma + \partial_r^2 \Gamma - \frac{1}{r} \partial_r
	\Gamma,  \label{eq:0115-3}
\end{eqnarray}
and
\begin{eqnarray}
	&  & \int_{\mathcal{D} (r, z, t)} (v \cdot \nabla | \omega_3 | -
	\tilde{\omega} \cdot \nabla v_3) d x_1 d x_2 \nonumber \\
	& = & \int_{\mathcal{D} (r, z, t)} \large{( - \nabla_h \cdot v_h
		| \omega_3 | + v_3 \partial_3 | \omega_3 | + \nabla_h \cdot \tilde{\omega}_h
		v_3 - | \omega_3 | \partial_3 v_3 )\Large} d x_1 d x_2 \nonumber \\
	&  & + \int_{\mathcal{S} (r, z, t)} (v \cdot e_r | \omega_3 | -
	\tilde{\omega} \cdot e_r v_3) d s \nonumber \\
	& = & \int_{\mathcal{S} (r, z, t)} (v \cdot e_r | \omega_3 | -
	\tilde{\omega} \cdot e_r v_3) d s. \label{eq:0115-4}
\end{eqnarray}
 Hence, integrating \eqref{eq:abs-omega-3} in the horizontal variables and using \eqref{eq:0115-3}--\eqref{eq:0115-4}, we obtain that for $- 1 < t < 0$, $- 1 < z < 1$ and $0 < r < 1$,
\begin{equation}
  \partial_t \Gamma - \partial_z^2 \Gamma - \partial_r^2 \Gamma + \frac{1}{r}
  \partial_r \Gamma + \int_{\mathcal{S} (r, z, t)} (v \cdot e_r | \omega_3 | -
  \tilde{\omega} \cdot e_r v_3) d s \le 0. \label{eq:Gamma-30}
\end{equation}
Applying Lemma \ref{thm:qrs} to $v$, there exist numbers $a \in
\big(\frac{2}{3}, \frac{4}{5}\big)$ and $\delta \geqslant K^{-O(K)} > 0$ such that $v$ is
regular in the space-time closure of $\mathcal{Q} (a + \delta) \setminus
\mathcal{Q} (a - \delta)$ with the estimates
\begin{equation}
  \sup_{\mathcal{Q} (a + \delta) \setminus \mathcal{Q} (a - \delta)} (| v | +
  | \nabla v | + | \nabla^2 v |) < K^{O(K)}. \label{eq:reg-v-30}
\end{equation}
For simplicity of presentation, in the sequel we do not keep track of the exact dependence of constants on $K$. Consider the domains
\begin{equation}
  \mathcal{D}_1 = \{(r, z, t) : 0 < r < a + \delta, | z | < a + \delta, - (a +
  \delta)^2 < t < - (a - \delta)^2 \}
\end{equation}
and
\begin{equation}
  \mathcal{D}_2 = \{(r, z, t) : a - \delta < r < a + \delta, | z | < a +
  \delta, - (a + \delta)^2 < t < 0\} .
\end{equation}
For $(r,z,t) \in \mathcal{D}_1 \cup \mathcal{D}_2$, due to \eqref{eq:cone-3-0} and \eqref{eq:reg-v-30} we have
\begin{align*}
  \int_{\mathcal{S} (r, z, t)} (v \cdot e_r | \omega_3 | - \tilde{\omega}
  \cdot e_r v_3) {ds} \ & {\geqslant} - C (K)
  \int_{\mathcal{S} (r, z, t)} | \omega_3 | d s\\
  & = - {C (K)} \partial_r \Gamma,
\end{align*}
hence by \eqref{eq:Gamma-30},
\begin{equation}
  \partial_t \Gamma - \partial_z^2 \Gamma - \partial_r^2 \Gamma - \frac{C
  (K)}{r} \partial_r \Gamma \le 0 \label{eq:Gamma-34}
\end{equation}
in the sense of distributions.

\smallskip

Our next goal is to show that
\begin{equation}
  \lim_{r \rightarrow 0 +} \sup_{\mathcal{Q} (r)} \Gamma = 0.
  \label{eq:Gamma-decay}
\end{equation}
We prove \eqref{eq:Gamma-decay} by contradiction. Suppose that
\begin{equation}
  \sup_{\mathcal{Q} (r)} \Gamma \ge \kappa \label{eq:kappa}
\end{equation}
for some positive constant $\kappa$ uniformly in $r \in (0, 1)$. By
\eqref{eq:reg-v-30}, there exists $r_{\ast} = r_{\ast} (K, \kappa) > 0$ such
that
\begin{equation}
  \Gamma (r, z, t) < \frac{\kappa}{2} \label{eq:Gamma-kappa-2}
\end{equation}
for $r < r_{\ast}, |z| < a, - (a + \delta)^2 < t < - (a - \delta)^2$. Let
\[ V(r,z,t) = \sup_{\mathcal{Q} \left( \frac{9}{10} \right)} \Gamma - \Gamma(r,z,t), \]
then by \eqref{eq:Gamma-34} we have
\begin{equation}
  \partial_t V - \partial_z^2 V - \partial_r^2 V - \frac{C (K)}{r} \partial_r
  V \ge 0, \label{eq:V-34}
\end{equation}
and by \eqref{eq:kappa}--\eqref{eq:Gamma-kappa-2} we have
\[ V (r, z, t) > \frac{\kappa}{2} \]
for $r < r_{\ast}, |z| < a, - (a + \delta)^2 < t < - (a - \delta)^2$.
Applying Lemma \ref{lem:dnm} to $V$ in $\mathcal{D}_1$ viewed as a 3D axisymmetric domain, we obtain that
\[ V (r, z, t) > c_1 (K, \kappa) > 0 \]
for $a - \delta < r < a, |z| < a, - a^2 < t < - (a - \delta)^2$. Then,
applying Lemma \ref{lem:dnm} to $V$ in $\mathcal{D}_2$, we further obtain that
\begin{equation}
  V (r, z, t) > c_2 (K, \kappa) > 0 \label{eq:V-c2}
\end{equation}
for $a - \frac{\delta}{2} < r < a + \frac{\delta}{2}, | z | < a, - \frac{1}{4}
< t < 0$. In view of the monotonicity of $\Gamma$ in $r$, \eqref{eq:V-c2}
implies that
\begin{equation}
  \sup_{\mathcal{Q} \left( \frac{9}{20} \right)}   \Gamma < \sup_{\mathcal{Q}
  \left( \frac{9}{10} \right)} \Gamma - c_2 (K, \kappa) \begin{array}{l}
    <
  \end{array} (1 - \sigma (K, \kappa)) \sup_{\mathcal{Q} \left( \frac{9}{10}
  \right)} \Gamma \label{eq:Gamma-sigma}
\end{equation}
for some $\sigma (K, \kappa) > 0$. Note that in the second inequality of
\eqref{eq:Gamma-sigma}, we used the upper bound $\eqref{eq:Gamma-upper}$.

For any small $r > 0$, consider the rescaled solution $v^{(r)} (x, t) = r v
(r x, r^2 t)$ with corresponding $\Gamma^{(r)} (x, t) = \Gamma (r x, r^2 t)$.
Clearly, both \eqref{eq:cone-3-0} and \eqref{eq:K} still hold for $v^{(r)}$.
Repeating the arguments of deriving \eqref{eq:Gamma-sigma}, we get
\begin{equation}
  \sup_{\mathcal{Q} \left( \frac{9}{20} \right)} \Gamma^{(r)} \le (1 - \sigma
  (K, \kappa)) \sup_{\mathcal{Q} \left( \frac{9}{10} \right)} \Gamma^{(r)},
\end{equation}
or equivalently
\begin{equation}
  \sup_{\mathcal{Q} \left( \frac{9 r}{20} \right)} \Gamma \le (1 - \sigma (K,
  \kappa)) \sup_{\mathcal{Q} \left( \frac{9 r}{10} \right)} \Gamma .
  \label{eq:Gamma-sigma-iter}
\end{equation}
Iterating \eqref{eq:Gamma-sigma-iter}, we deduce that for $m = 1, 2, 3,
\cdots$,
\begin{equation}
  \sup_{\mathcal{Q} \left( 2^{- m} \times \frac{9}{10} \right)} \Gamma \le (1
  - \sigma (K, \kappa))^m \sup_{\mathcal{Q} \left( \frac{9}{10} \right)}
  \Gamma,
\end{equation}
which contradicts with \eqref{eq:kappa}. In conclusion, \eqref{eq:Gamma-decay}
holds true.

Repeating the proof of Lemma \ref{lem:Type-I} and using the smallness of $\Gamma$
instead of just boundedness, we deduce that
\[ \limsup_{r \to 0 +}  \frac{1}{r^2}  \int_{\mathcal{Q} (r)} (|v|^3 +
   |p|^{ \frac{3}{2}}) d x d t = 0. \]
Finally, standard $\varepsilon$-regularity theory  {\cite{CKN, Lin}} implies that the origin is a regular point of $v$.

\end{proof}

\section{Proofs of Corollaries \ref{cor:114} and \ref{thm:rot}}\label{sec:5}

\begin{proof}[Proof of Corollary \ref{cor:114}]
	If the origin is a regular point, then $\mathcal{I} = \emptyset$ since $|\omega|$ is locally bounded. On the other hand, if $\mathcal{I}$ misses a great circle on $\mathbb{S}^2$, then for some $r \in (0,1)$ and some $M>0$, $\mathcal{I}_{r,M}$ is contained in the intersection of a double cone with $\mathbb{S}^2$. By Theorem \ref{thm:main}, the solution will be regular at the origin.
\end{proof}

\begin{proof}[Proof of Corollary \ref{thm:rot}]
Let $v$ be a suitable weak solution to the Navier-Stokes equations in $Q (1)$
satisfying \eqref{eq:cone-t} at all regular points with $| \omega | \neq 0$.
Again, by the reduction argument in the second paragraph of Section
\ref{sec:3}, we can assume without loss of generality that $v$ is regular for
all negative times.

By \eqref{eq:cone-t}, for any time $- 1 < t < 0$, we can find some unit vector
$e$ such that
\begin{equation}
  | \omega | \leqslant C | \omega \cdot e | . \label{eq:cone-e}
\end{equation}
Indeed, if there exists an $x_0 \in B (1)$ such that $| \omega (x_0, t) | >
0$, then we can simply take $e = \xi (x_0, t)$. Otherwise, at time $t$ we have
$\omega \equiv 0$ and \eqref{eq:cone-e} trivially holds for an arbitrary $e \in \mathbb{S}^2$.
Take two unit vectors $e_a$, $e_b$ such that  \{$e_a, e_b, e$\}  forms an
orthonormal basis for $\mathbb{R}^3$ and denote the corresponding spatial
coordinates by
\begin{equation}
  y_1 = x \cdot e_a, \quad y_2 = x \cdot e_b, \quad y_3 = x \cdot e.
  \label{eq:def-y}
\end{equation}
Recall that in Section \ref{sec:3}, we proved the critical bound
\eqref{eq:unif-omega} under the assumption \eqref{eq:cone-3-0}. Using the same
argument (in $y$-coordinates) in the time $t$ slice, with \eqref{eq:cone-e}
instead of \eqref{eq:cone-3-0}, we get
\begin{equation}
  \sup_{- \frac{1}{2} < y_3 < \frac{1}{2}} \int_{\sqrt{y_1^2 + y_2^2} <
  \frac{1}{2}} | \omega | d y_1 d y_2 < C (G) . \label{eq:unif-omega-2}
\end{equation}
Note that the right hand side of \eqref{eq:unif-omega-2} is time-independent.
Using \eqref{eq:unif-omega-2} and repeating the arguments for Lemma
\ref{lem:Type-I} (using the $y$-coordinates), we deduce that $v$ satisfies the uniform Type I condition in $Q\big( \frac12 \big)$.

Consider the following two cases:

\smallskip
\emph{Case 1}. $\omega \equiv 0 \ \emph{a.e.}$ for $|x|<1$ and $t = 0$.
\smallskip

Consider a regular parabolic shell $Q(a+\delta) \setminus Q(a-\delta)$ guaranteed by Lemma \ref{thm:qrs}.  By continuity of $\omega$ within the regular shell together with the assumption of this case,  for any $\varepsilon>0$ there exists $T(\varepsilon) \in (-1,0)$ such that
\begin{equation} \label{eq:1026-1}
	|\omega| < \varepsilon
\end{equation}
pointwisely within the set $\left(B(a+\delta) \setminus B(a-\delta)\right) \times (T(\varepsilon),0)$. Repeating the proof of \eqref{eq:unif-omega-2} and using \eqref{eq:1026-1}, we deduce that
\begin{equation}
	\sup_{- \frac{1}{2} < y_3 < \frac{1}{2}} \int_{\sqrt{y_1^2 + y_2^2} <
		\frac{1}{2}} | \omega | d y_1 d y_2 < C \varepsilon, \label{eq:unif-omega-3}
\end{equation}
for $t \in (T(\varepsilon), 0)$. Hence, $v$ satisfies the Type I condition with smallness in $Q(|T(\varepsilon)|^\frac12)$, which leads to regularity at the origin if $\varepsilon$ is chosen small enough.

\smallskip
\emph{Case 2}. There exists some $x_{\ast} \in B(1)$ such that $(x_{\ast}, 0)$ is regular point and $| {\omega} (x_{\ast}, 0) | \neq 0$.

\smallskip
Let $e_{\ast} = { {\xi} (x_{\ast},
0) }$. By continuity, in a small parabolic neighbourbood of $(x_{\ast}, 0)$,
$|{\omega} |$ is separated from $0$ and ${\xi}$ is close to
$e_{\ast}$. By condition \eqref{eq:cone-t}, we obtain that \eqref{eq:cone}
holds (for some new $\delta > 0$) with $e = e_{\ast}$ in $\mathbb{R}^3 \times
(- h, 0)$ for $h$ sufficiently small. By Theorem \ref{thm:main}, we deduce
that ${v}$ is regular at the origin.

\medskip

In conclusion, $v$ must be regular at the origin. Hence, Corollary
\ref{thm:rot} is proved.
\end{proof}

\section{On the direction of velocity} \label{sec:last}

We can also prove a simple result on the direction of velocity, in the axisymmetric case. The extension of Proposition \ref{prop:115-000} to the general non-axisymmetric case would imply that near a potential singularity, the range of $\pm\frac{v}{|v|}$ is dense on the unit sphere.

\begin{proposition} \label{prop:115-000}
	Let $(v,p)$ be an axisymmetric suitable weak solution to the Navier-Stokes equations in $\mathcal{Q}(1)$. If for some $\delta, M>0$, there holds either $|v| \le M$ or
	\begin{equation} \label{eq:115-002}
		\left|\frac{v}{|v|} \cdot e_3\right| \le 1-\delta,
	\end{equation}
	then $v$ is regular at the origin.
\end{proposition}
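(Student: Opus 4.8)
The plan is to follow the blow-up scheme of Section \ref{sec:3}, taking advantage of the distinguished role of the symmetry axis. The first and decisive observation is that \eqref{eq:115-002} forces the velocity to be bounded on the axis. Indeed, by axisymmetry every regular axis point has $v_r = v_\theta = 0$, so $v = v_3 e_3$ is parallel to $e_3$; the geometric alternative in \eqref{eq:115-002} is then impossible (it would read $1 \le 1-\delta$), and therefore $|v| = |v_3| \le M$ there. Writing \eqref{eq:115-002} in the equivalent pointwise form $|v_3| \le C |v_h|$ with $|v_h| = \sqrt{v_r^2 + v_\theta^2}$ (exactly as in the Remark after Theorem \ref{thm:main}), the hypothesis says that the vertical velocity is subordinate to the poloidal and swirl velocities, and in particular cannot concentrate on the axis.

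Next I would reduce, via the argument in the second paragraph of Section \ref{sec:3}, to a solution regular for all negative times, and recall that for axisymmetric suitable weak solutions the Caffarelli--Kohn--Nirenberg theorem confines the singular set to the axis (an off-axis singular point would generate a whole circle of them, of positive $1$-dimensional measure). The goal is then to blow up at the origin, $v^{(r_k)}(x,t) = r_k v(r_k x, r_k^2 t)$. The direction condition is scale invariant while the threshold rescales as $r_k M \to 0$, so any limit $\hat v$ inherits \eqref{eq:115-002} with $M = 0$: at every regular point $|\hat v_3| \le C |\hat v_h|$, and in particular $\hat v \equiv 0$ on the axis. Once $\hat v$ is known to be a (Type I) bounded ancient axisymmetric solution, I would conclude $\hat v$ is regular at the origin either directly from the exclusion of axisymmetric Type I singularities in \cite{CSYT1, CSYT2, KNSS, S20}, or by running the flux-decay argument of Section \ref{sec:4}; regularity of $v$ at the origin then follows from \cite[Proposition 2]{SS}.

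The hard part is to make the blow-up rigorous, i.e. to produce uniform Type I bounds from \eqref{eq:115-002}, and here the three velocity components behave very differently. The swirl is controlled unconditionally: the swirl function $\Gamma_{\mathrm{sw}} = r v_\theta$ satisfies the maximum principle for its drift-diffusion equation and is bounded inside the quantitative regular shells of Lemma \ref{thm:qrs}, which yields $|v_\theta| \le C/r$ and hence the scale-invariant estimate $\sup_r r^{q-3}\int_{B(r)} |v_\theta|^q\,dx < \infty$ for $q \in (\tfrac32, 2)$. The direction condition then controls the vertical velocity through $|v_3| \le C(|v_r| + |v_\theta|)$. The genuinely missing estimate is a scale-invariant bound on the poloidal inflow $v_r$, on which \eqref{eq:115-002} gives no information, so that only the supercritical energy is available for it a priori. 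I expect this to be the crux.

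It is tempting to imitate the main argument by finding a flux quantity for $v$ that is monotone in $r$ and controlled by \eqref{eq:115-002}, to which the De Giorgi--Nash--Moser decay of Lemma \ref{lem:dnm} could be applied. The natural disc integral $\int_{\mathcal{D}(r,z,t)} |v_3|\,dx_1 dx_2$ is indeed monotone in $r$ and bounded by the condition, but its evolution carries the nonlocal pressure term $\partial_3 p$, for which there is no analogue of the clean cancellation \eqref{eq:W-rhs} available for the vorticity. The vorticity flux $\Gamma = \int_{\mathcal{D}} |\omega_3|$ used in Sections \ref{sec:3}--\ref{sec:4} is monotone and pressure-free, but its boundedness rested on the \emph{vorticity} cone condition, which is not implied by \eqref{eq:115-002}. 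Reconciling these — extracting from the bound on $v_3$ and the swirl a handle on $v_r$ or on a suitable monotone flux — is the step I anticipate will require the real work, and is presumably where the axisymmetric structure (e.g. the poloidal stream function $\psi$, for which \eqref{eq:115-002} reads $(\partial_r \psi)^2 \le C^2(\partial_3\psi)^2 + C^2\|\Gamma_{\mathrm{sw}}\|_\infty^2$) must be combined with the axis vanishing of $\hat v$.
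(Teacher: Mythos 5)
You have assembled most of the right ingredients --- the reduction to negative-time regularity, the equivalent form $|v_z|\le C(1+|v_r|+|v_\theta|)$, the boundedness of the swirl $rv_\theta$ inside the quantitative regular shells, and even the stream-function inequality $|\partial_r\psi|\le C_1+C_2|\partial_z\psi|$ --- but the proof has a genuine gap exactly where you flag it: condition \eqref{eq:115-002} gives no scale-invariant control on $v_r$, so the uniform Type I bounds needed to launch the blow-up are never established, and the argument stalls. Your fallback candidates do not repair this: the disc integral of $|v_3|$ carries the nonlocal pressure term, and the vorticity flux of Sections \ref{sec:3}--\ref{sec:4} is not controlled by a condition on the \emph{velocity} direction. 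So the proposal, as written, proves nothing beyond the hypotheses; the crux is missing, and you say so yourself.

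The missing idea (and the paper's actual proof) is that one should not seek a Type I bound on $v_r$, or run a blow-up at all: the differential inequality you already wrote down for $\psi$ can be integrated directly into a \emph{pointwise critical bound on $\psi$ itself}. Fix a time slice and a starting point $(r_0,z_0)$ with $r_0,|z_0|<\varepsilon=(100C_2)^{-1}$, and walk toward the axis as follows. Where $|\partial_r\psi|>2C_1$, the inequality forces $|\partial_z\psi|>C_1/C_2$, so the level curve of $\psi$ through that point can be followed in the direction of decreasing $r$ with slope bounded by $3C_2$ in the $(r,z)$-plane; along it $\psi$ does not change, and the slope bound keeps the path inside $\mathcal{B}(1)$. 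Where $|\partial_r\psi|\le 2C_1$, move radially inward with $z$ fixed, so $\psi$ varies at rate at most $2C_1$. Alternating these two moves, the path ends on the axis, where $\psi=0$, and the accumulated variation of $\psi$ is at most $3C_1 r_0$. Hence $|\psi(r_0,z_0)|\le 3C_1 r_0$, i.e.\ $\psi/r$ is bounded near the axis uniformly in time --- a critical, scale-invariant estimate --- and regularity at the origin follows from the criterion of \cite{LZ}. No blow-up, no Type I theory, and no De Giorgi--Nash--Moser iteration is needed; the swirl bound and the geometric condition enter only through the single inequality $|\partial_r\psi|\le C_1+C_2|\partial_z\psi|$, which you had in hand but did not exploit.
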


\begin{proof}
	We use the standard cylindrical coordinates $(r, \theta, z)$ with $r=|x_h|$ and $z = x_3$. The condition \eqref{eq:115-002} is equivalent to
	\begin{equation}
		|v_z| \le C (1 + |v_\theta| + |v_r|).
	\end{equation}
	 By the second paragraph of Section \ref{sec:3}, we can assume that $v$ is regular for negative times. Consider any fixed time $-\frac12<t<0$. Let $\psi$ be the stream function defined by
	\begin{equation}
		\partial_r \psi = r v_z, \quad \partial_z \psi = -r v_r, \quad \psi|_{r=0} = 0
	\end{equation}
	Then, using the boundedness of $\Gamma = rv_\theta$ (see, \emph{e.g.}, \cite[Lemma 18]{LR}), we deduce that
	\begin{equation}
		|\partial_r \psi| \le C_1 + C_2 |\partial_z \psi|.
	\end{equation}
	Without loss of generality, suppose that $C_1, C_2 > 10$. Starting at any point $(r_0, \theta,z_0) \in \mathcal{B}\big(\varepsilon\big)$ with $\varepsilon = (100 C_2)^{-1}$, we perform the following exploration procedure:
	\begin{enumerate}[(i)]
		\item If $|\partial_r \psi| > 2C_1$, then we follow the level set of $\psi$ in the direction of decreasing $r$, until we reach $r=0$ and stop, or we reach a point where $|\partial_r \psi| \le 2C_1-1$ and perform  (ii). In this step, we always have
			$$|\partial_r \psi| \le 3C_2 |\partial_z \psi|,$$
		which ensures that we stay in $\mathcal{B}(1)$.
		
		 \item if $|\partial_r \psi| \le 2C_1$, then we simply move with $z$ fixed and $r$ decreasing, until we reach $r=0$ and stop, or we reach a point where $|\partial_r \psi| \ge 2C_1+1$ and perform (i).
	\end{enumerate}
	At the end of this procedure, we must end up with $r=0$, $\psi=0$. Keeping track of $\psi$ along our path, we obtain that
	$$|\psi(r_0, z_0)| \le 3C_1 r_0.$$ 
	This means that $v$ satisfies a critical estimate \cite{LZ}, and as a consequence, must be regular near the origin.
	\end{proof}

\

\paragraph{Acknowledgement.} Z.L. is in part supported by NSFC (No.
12431007), Sino-German Center (No. M-0548) and New Cornerstone Science Foundation
through the XPLORER PRIZE. X.R. was supported by the China Postdoctoral Science Foundation under Grant Number BX20230019 and the National Key R\&D Program of China (No.2023YFA1010700). G.T. was supported in part
by NSFC No.11890660 \& No.12341105, MOST No.2020YFA0712800.

\bibliographystyle{plain}
\bibliography{lrt}

\end{document}